\newcommand{\rot}{\operatorname{rot}}
\newcommand{\ddiv}{\operatorname{div}}
 \newtheorem{theorem}{Theorem}
 \newtheorem{lemma}[theorem]{Lemma}
 \newtheorem{corollary}[theorem]{Corollary}
 \newtheorem{condition}[theorem]{Condition}
 \theoremstyle{definition}
 \newtheorem{example}[theorem]{Example}
 \theoremstyle{remark}
 \newtheorem{remark}[theorem]{Remark}
\numberwithin{equation}{section}
\numberwithin{theorem}{section}
\begin{document}
\title{Mixed methods and lower eigenvalue bounds}

\author{Dietmar Gallistl}
\address{Universit\"at Jena, Institut f\"ur Mathematik,
         07743 Jena, Germany}
\email{dietmar.gallistl [at symbol] uni-jena.de}
\date{}

\begin{abstract}
It is shown how mixed finite element methods for 
symmetric positive definite eigenvalue problems related
to partial differential operators can provide guaranteed
lower eigenvalue bounds.
The method is based on a classical
compatibility condition (inclusion of kernels)
of the mixed scheme and
on local constants related to compact embeddings,
which are often known explicitly.
Applications include scalar second-order elliptic operators, linear elasticity,
and the Steklov eigenvalue problem.
\end{abstract}

\keywords{lower eigenvalue bounds; mixed finite element}
\subjclass{%
65N25
}

\maketitle

\section{Introduction}

Variationally posed symmetric eigenvalue problems 
related to positive definite partial differential equations (PDEs)
with a compact resolvent are subject
to the Rayleigh--Ritz principle
\cite{WeinsteinStenger1972}, which characterizes the 
eigenvalues as certain minima in the corresponding
Hilbert space.
Consequently, conforming discretization methods,
which are based on subspaces of the same Hilbert space,
result in upper eigenvalue bounds.
In contrast, the identification of guaranteed
and efficient lower bounds to the eigenvalues is much
more challenging, and general principles leading to lower bounds
are not known.
In the context of finite element methods (FEMs) for some
PDE-based eigenvalue problems, major progress was achieved
by
\cite{CarstensenGedicke2014,LiuOishi2013}
where `nonstandard' methods, i.e., methods beyond 
merely conforming discretizations, were employed to prove
computable guaranteed lower bounds for the Laplacian.
These approaches were later generalized to other
eigenvalue problems in
\cite{CarstensenGallistl2014,YouXieLiu2019,GallistlOlkhovskiy2021}.
These methods have in common that they make
use of the explicit knowledge of stability or 
approximation constants for projection operators related
to the particular underlying finite element method.

By introducing a dual (or stress) variable, PDE eigenvalue
problems can be posed in an equivalent mixed formulation;
more precisely and following the terminology
of \cite{Boffi2010}, as a mixed problem of the second type.
Stability properties as well as asymptotic error estimates for 
eigenvalue problems in mixed formulation are well understood,
and the state of the art is documented in the 
review article \cite{Boffi2010} and the monograph
\cite{BoffiBrezziFortin2013}.
Mixed formulations of positive definite problems are
usually posed as saddle-point problems, which implies
a higher computational cost compared with standard discretizations.
While application-related advantages of operating with
the mechanically relevant stress variable are sometimes
mentioned for justifying and advertising mixed methods,
a decisive structural advantage for systematically using
mixed methods in eigenvalue computations has remained obscure.
It is the aim of this contribution to reveal a basic and rather
generic feature of dual mixed formulations and 
related finite element discretizations
that allows for the computation of guaranteed lower
eigenvalue bounds in many practical examples.
The involved constants are related to properties
of the underlying PDE operator rather than special properties
of the discretization space.
The applications presented in this paper include
the Laplacian, general second-order scalar coercive
operators, the Lam\'e eigenvalues of linear elasticity
(where the present method seems to be the first in the literature
to provide guaranteed lower eigenvalue bounds),
and the Steklov eigenvalue problem.
Generalizations to the Stokes or the biharmonic eigenvalue
problem are possible and briefly outlined at the end of
the paper.

The principal assumption on the discretization of the mixed
system is the inclusion of kernels, a classical compatibility
condition that for example guarantees that the equation for
the divergence holds pointwise in the case of the Laplacian.
In the usual mixed setting with discrete spaces $\Sigma_h$
and $U_h\subseteq U$ and a bilinear form $b$,
the inclusion of kernels implies that a projection $P_h$
(in many cases the orthogonal projection) from $U$ to $U_h$
exists such that for any $v\in U$,
$b(\cdot,v)$ and $b(\cdot,P_h v)$ represent the same 
linear functional over $\Sigma_h$.
A consequence proven in this paper is the commutation
property
$$
          \mathfrak P_h G = G_h P_h
$$
where $\mathfrak P_h$ is the orthogonal projection to $\Sigma_h$
with respect to a scalar product $a$
and $G$ is a gradient-like operator (from an integration
by parts formula) with its discrete analogue $G_h$.
In the simplest setting (without lower-order terms),
the first discrete eigenvalue $\lambda_{1,h}$ is the minimum of the 
Rayleigh quotient $\|G_h v_h\|_a^2/\|v_h\|_\ell^2$ for some
seminorm $\|\cdot\|_\ell$ over appropriate elements
$v_h$ from $U_h$.
Accordingly, the projection $P_h u$ of an $\ell$-normalized
first exact eigenfunction satisfies,
as a candidate for the minimum,
$$
 \lambda_{1,h} \|P_h u\|_\ell^2
 \leq 
 \|G_h P_h u\|_a^2
 \leq 
 \lambda
$$
because of the commutation property and $\|Gu\|_a^2=\lambda$.
Consequently, explicit control on the deviation
of $\|P_h u\|_\ell^2$ from $\|u\|_\ell^2=1$
results in a guaranteed lower bound for $\lambda$.
Under the assumption that there exists some $\delta_h$
such that 
$\|u-P_h u\|_\ell \leq \delta_h \|Gu\|_a$,
the following guaranteed lower bound
is established in Theorem~\ref{t:lowerbound}
\begin{equation*}
\frac{\lambda_h}{1+\delta_h^2\lambda_h}  \leq \lambda.
 \end{equation*}
In contrast to the methods proposed in
\cite{CarstensenGedicke2014,LiuOishi2013},
this paper thereby provides a methodology that covers rather general
operators in the sense that it basically requires the 
structure of saddle-point eigenvalue problems of the second type
and some control on the generic projection $P_h$
related to the inclusion-of-kernels
property,
but no particular knowledge on special interpolation or
solution operators. It therefore covers a variety of eigenvalue
problems with, e.g., variable coefficients and
lower-order terms.
A limitation of the approach as a post-processing method is
that it intrinsically is a low-order method,
which is also the case for the existing schemes
\cite{CarstensenGedicke2014,LiuOishi2013}.
The reason is that the quantity $\delta_h^2$ in the 
denominator of the lower bound is usually related to
some compact embedding and does not improve when higher-order
methods are employed.
A direct computation shows that
the difference of $\lambda$ and the lower bound cannot be
of a better order than $O(\delta^2)$.
For the Laplacian, this limitation was recently overcome
by \cite{CarstensenPuttkammer2022}, but the argument again
uses particular properties of related finite element spaces
and seems to be less universal.

\medskip
This paper is organized as follows.
Section~\ref{s:commut} proves the fundamental commutation
property for mixed methods with a compatibility
condition (inclusion of kernels).
The resulting abstract lower eigenvalue bound is 
shown in Section~\ref{s:lower}.
The subsequent sections show applications to
the Laplacian (Section~\ref{s:lapl}),
scalar elliptic operators (Section~\ref{s:elliptic}),
linear elasticity (Section~\ref{s:elast}),
and the Steklov eigenvalue problem (Section~\ref{s:steklov}).
Section~\ref{s:concl} provides
some conclusive remarks.

\section{A basic commutation property}\label{s:commut}

Let $\Sigma$, $U$ be Hilbert spaces 
(the corresponding norms are denoted by
$\|\cdot\|_\Sigma$ and $\|\cdot\|_U$)
with bounded bilinear forms
$$
 a:\Sigma\times\Sigma\to \mathbb R,
 \quad
 b:\Sigma\times U\to \mathbb R.
$$
Assume that $a$ is symmetric and positive definite
so that it induces a norm $\|\cdot\|_a=a(\cdot,\cdot)^{1/2}$,
which is in general different from the norm in $\Sigma$.
Let $\Sigma_h\subseteq\Sigma$, $U_h\subseteq U$ 
be finite-dimensional subspaces.
We remark that the finite dimensional space $\Sigma_h$ is also
a Hilbert space when endowed with the inner product $a$.
Given any $v\in U$, we write
$G_h v\in\Sigma_h$ for the solution of the system
$$
  a(G_h v,\tau_h)  = -b(\tau_h,v) 
   \quad\text{for all }\tau_h\in\Sigma_h.
$$
This system is uniquely solvable because $a$ is an inner product
on the finite-dimensional space $\Sigma_h$.
In general, the analogue problem in the infinite-dimensional
space $\Sigma$ need not be solvable because 
$\Sigma$ is not assumed complete and
$b(\cdot,v)$ is not 
assumed continuous with respect to the norm $\|\cdot\|_a$.
Let therefore $\overline\Sigma$ denote the closure of $\Sigma$ with
respect to $\|\cdot\|_a$ and let
$U_0\subseteq U$ denote the space of all elements 
$v$ of $U$ admitting
a solution $Gv\in\overline\Sigma$ to
\begin{equation}
\label{e:gradient}
  a(Gv,\tau)  = -b(\tau,v) 
   \quad\text{for all }\tau\in\Sigma.
\end{equation}
The next lemma states that the space $U_0$ is a Hilbert space
and satisfies an equivalence of norms provided
the classical inf-sup condition
\cite{BoffiBrezziFortin2013} holds.

\begin{lemma}\label{l:hilbert}
Let $(\cdot,\cdot)_U$ denote the inner product of $U$.
 The space $U_0$ with the inner product
 $$
    (v,w)_U + a(Gv,Gw) \quad\text{for any }v,w\in U_0
 $$
 is a Hilbert space.
 If the inf-sup condition 
 \begin{equation}
  \label{e:infsup}
 0<\beta=
 \inf_{v\in U \setminus \{0\}}
 \sup_{\tau\in\Sigma\setminus \{0\}}
  \frac{b(\tau,v)}{\|\tau\|_\Sigma \|v\|_U} 
 \end{equation}
 with some positive number $\beta>0$
 is satisfied,
 the norms $\|G\cdot\|_a$ and $(\|\cdot\|_U^2+\|G\cdot\|_a^2)^{1/2}$
 are equivalent on $U_0$.
\end{lemma}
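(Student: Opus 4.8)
The plan is to treat the two assertions separately: an elementary Hilbert-space completeness argument for the first, and the inf-sup condition combined with the Cauchy--Schwarz inequality in $\overline\Sigma$ for the second. Throughout I would use that $a$ extends to a genuine inner product on the completion $\overline\Sigma$ and that $\Sigma$ is dense in $\overline\Sigma$.

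First I would record that $G\colon U_0\to\overline\Sigma$ is well defined and linear: if $\sigma,\sigma'\in\overline\Sigma$ both solve \eqref{e:gradient}, then $a(\sigma-\sigma',\tau)=0$ for every $\tau\in\Sigma$, hence for every $\tau\in\overline\Sigma$ by density, so $\sigma=\sigma'$; linearity follows from that of \eqref{e:gradient}. Consequently $(v,w)\mapsto(v,w)_U+a(Gv,Gw)$ is an inner product on $U_0$, its positive definiteness coming from $(\cdot,\cdot)_U$. For completeness, let $(v_n)_n$ be Cauchy for the associated norm. Then $(v_n)_n$ is Cauchy in $U$, so $v_n\to v$ in $U$, and $(Gv_n)_n$ is Cauchy in the complete space $\overline\Sigma$, so $Gv_n\to\sigma$ there. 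Fixing $\tau\in\Sigma$ and passing to the limit in $a(Gv_n,\tau)=-b(\tau,v_n)$, the left-hand side tends to $a(\sigma,\tau)$ by the Cauchy--Schwarz inequality for $a$, and the right-hand side tends to $-b(\tau,v)$ because $b$ is bounded on $\Sigma\times U$ and hence continuous in its second argument. Thus $a(\sigma,\tau)=-b(\tau,v)$ for all $\tau\in\Sigma$, i.e.\ $v\in U_0$ with $Gv=\sigma$, and therefore $v_n\to v$ in the graph norm; this shows that $U_0$ is a Hilbert space.

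For the norm equivalence, one inequality, $\|Gv\|_a\le(\|v\|_U^2+\|Gv\|_a^2)^{1/2}$, is immediate, so it remains to bound $\|v\|_U$ by a multiple of $\|Gv\|_a$. Writing $b(\tau,v)=-a(Gv,\tau)$ from \eqref{e:gradient}, then applying Cauchy--Schwarz for $a$ together with the continuity of $a$ in the form $\|\tau\|_a\le M^{1/2}\|\tau\|_\Sigma$ (with $M$ the continuity constant of $a$), one obtains $|b(\tau,v)|\le M^{1/2}\|Gv\|_a\,\|\tau\|_\Sigma$ for every $\tau\in\Sigma$. Dividing by $\|\tau\|_\Sigma$, taking the supremum over $\tau$, and invoking the inf-sup condition \eqref{e:infsup} gives $\beta\|v\|_U\le M^{1/2}\|Gv\|_a$, whence $\|v\|_U^2+\|Gv\|_a^2\le(1+M\beta^{-2})\|Gv\|_a^2$, as required. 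The computations are routine; the only point needing care is the bookkeeping of the two generally inequivalent norms on $\Sigma$ — in particular, using the continuity of $a$ in the correct direction to pass from $\|\tau\|_a$ to $\|\tau\|_\Sigma$ in the denominator of the inf-sup quotient, and keeping all limit passages inside $\overline\Sigma$, where $a$ is an honest inner product.
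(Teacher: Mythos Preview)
Your proof is correct and follows essentially the same approach as the paper's own proof: both establish completeness by showing that the componentwise limits $v\in U$ and $\sigma\in\overline\Sigma$ of a Cauchy sequence satisfy the defining relation \eqref{e:gradient}, and both derive the norm equivalence by combining the inf-sup condition with the identity $b(\tau,v)=-a(Gv,\tau)$, the Cauchy--Schwarz inequality for $a$, and the continuity bound $\|\tau\|_a\le C_a^{1/2}\|\tau\|_\Sigma$. Your additional remark on the well-definedness and linearity of $G$ is a useful clarification that the paper leaves implicit.
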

\begin{proof}
 For the proof that $U_0$ is a Hilbert space, it suffices to
 show that it is complete. Let $(u_j)_j$ be a sequence in
 $U_0$ such that $u_j\to u\in U$
 with respect to $\|\cdot\|_U$ and $Gu_j\to\sigma\in\overline\Sigma$
 with respect to $\|\cdot\|_a$, for $j\to\infty$.
 Then, for any $\tau\in\Sigma$,
 $$
  a(\sigma,\tau)
  =
  a(\sigma-Gu_j,\tau) - b(\tau, u_j-u) - b(\tau,u). 
 $$
 The limit $j\to\infty$ and the continuity of $a$ and $b$
 prove
 $a(\sigma,\tau)=-b(\tau,u)$ for any $\tau\in\Sigma$.
 Thus, $u\in U_0$ with $\sigma=Gu$.
 
 For the proof of equivalence of norms,
 let $C_a$ denote the continuity constant of $a$ with respect to 
 $\|\cdot\|_\Sigma$.
 The inf-sup condition \eqref{e:infsup} shows for any $v\in U_0$
 that
 $$
  \beta \|v\|_U
  \leq 
  \sup_{\tau\in\Sigma\setminus\{0\}} \frac{b(\tau,v)}{\|\tau\|_\Sigma}
  =
  \sup_{\tau\in\Sigma\setminus\{0\}} \frac{a(Gv,\tau)}{\|\tau\|_\Sigma}
  \leq 
  C_a^{1/2}\sup_{\tau\in\Sigma\setminus\{0\}} \frac{a(Gv,\tau)}{\|\tau\|_a}
  =
  C_a^{1/2}\|Gv\|_a.
 $$
 This implies the asserted equivalence of norms.
\end{proof}

The following compatibility condition is essential to 
the subsequent arguments.

\begin{condition}\label{cond:incl}
 The kernel
$$
 Z:= \{\tau\in\Sigma : b(\tau,v)=0 \text{ for all }v\in U\}
$$
and the discrete kernel
$$
Z_h:= \{\tau_h\in\Sigma_h : b(\tau_h,v_h)=0 \text{ for all }v_h\in U_h\}
$$
satisfy the inclusion $Z_h \subseteq Z$.
\end{condition}

It is known \cite[Lemma~2.3]{CarstensenGallistlSchedensack2016}
that Condition~\ref{cond:incl} is equivalent to the existence of a projection
$P_h:U\to U_h$ such that
\begin{equation}
\label{e:projP}
 b(\tau_h,v-P_h v) = 0
 \quad\text{for all }(\tau_h,v)\in\Sigma_h\times U.
\end{equation}
The argument is briefly repeated here for convenience of the reader.

\begin{lemma}
 Condition~\ref{cond:incl} is equivalent to the existence of
a linear projection $P_h:U\to U_h$ satisfying \eqref{e:projP}.
\end{lemma}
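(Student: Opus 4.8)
The plan is to prove the two implications separately; the converse direction is immediate, while the construction of $P_h$ carries all the weight.

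\emph{If such a $P_h$ exists, then $Z_h\subseteq Z$.} Here I would take $\tau_h\in Z_h$ and an arbitrary $v\in U$ and split $b(\tau_h,v)=b(\tau_h,P_hv)+b(\tau_h,v-P_hv)$. The first term vanishes because $P_hv\in U_h$ and $\tau_h\in Z_h$, the second by \eqref{e:projP}; hence $b(\tau_h,v)=0$ for all $v\in U$, i.e.\ $\tau_h\in Z$. This uses only linearity of $P_h$ and $P_h(U)\subseteq U_h$, not idempotency.

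\emph{If $Z_h\subseteq Z$, then $P_h$ exists.} The idea is to exploit that $\Sigma_h$ is finite-dimensional. I would introduce the linear operator $\mathcal B\colon U\to\Sigma_h'$ given by $\langle\mathcal Bv,\tau_h\rangle:=b(\tau_h,v)$ together with its restriction $\mathcal B_h:=\mathcal B|_{U_h}$, and first show that Condition~\ref{cond:incl} is equivalent to the range identity $\mathcal B(U)=\mathcal B_h(U_h)$. The inclusion $\mathcal B_h(U_h)\subseteq\mathcal B(U)$ is trivial. For the reverse inclusion, note that both sides are subspaces of the finite-dimensional space $\Sigma_h'$, so a strict inclusion would produce a linear functional on $\Sigma_h'$ that annihilates $\mathcal B_h(U_h)$ but not $\mathcal B(U)$; under the identification $\Sigma_h''\cong\Sigma_h$ this functional is evaluation at some $\tau_h\in\Sigma_h$, and the two properties say precisely $\tau_h\in Z_h$ and $\tau_h\notin Z$, contradicting $Z_h\subseteq Z$. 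Granting the range identity, I would fix any algebraic complement $W_h$ of $N_h:=\ker\mathcal B_h$ in $U_h$, so that $\mathcal B_h|_{W_h}\colon W_h\to\mathcal B(U)$ is a linear isomorphism, and define
$$
 P_h:=(\mathcal B_h|_{W_h})^{-1}\circ\mathcal B.
$$
Then $P_h$ is linear with values in $U_h$; the identity $\mathcal B(P_hv)=\mathcal Bv$ is exactly \eqref{e:projP}; and since $P_h$ restricts to the identity on $W_h\supseteq P_h(U)$ one gets $P_h^2=P_h$, so $P_h$ is a projection.

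I expect the only genuine obstacle to be the dualization step, i.e.\ converting the kernel inclusion $Z_h\subseteq Z$ into the range identity $\mathcal B(U)=\mathcal B_h(U_h)$; everything afterwards is linear-algebra bookkeeping. I would also add a remark that $P_h$ is not canonical: it depends on the choice of $W_h$ and is in general neither surjective onto $U_h$ nor $\|\cdot\|_U$-continuous, but in the applications one chooses $W_h$ as an orthogonal complement, which turns $P_h$ into the relevant orthogonal projection. As a fully explicit alternative, fixing bases of $\Sigma_h$ and $U_h$ turns $b$ into a matrix $M$, recasts Condition~\ref{cond:incl} as $\operatorname{range}\mathcal B\subseteq\operatorname{range}M$, and exhibits $P_h$ through the Moore--Penrose pseudoinverse of $M$.
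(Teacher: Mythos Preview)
Your proof is correct and follows essentially the same route as the paper. The paper phrases the key step as an application of the closed range theorem in finite dimensions, namely $b(\cdot,U_h)=Z_h^0\subseteq\Sigma_h^*$, which is precisely your range identity $\mathcal B(U)=\mathcal B_h(U_h)$ established by separation; and where you pick an arbitrary algebraic complement $W_h$ of $\ker\mathcal B_h$ to single out a linear representative, the paper makes the specific choice $W_h=T_h(\Sigma_h)$ via the discrete Riesz map, which is exactly the orthogonal complement you mention in your closing remark.
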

\begin{proof}
The closed range theorem \cite{Braess2007} in the finite-dimensional
setting states
$$
 b(\cdot,U_h) = Z_h^0 \subseteq \Sigma_h^*,
$$
namely that the functionals $b(\cdot,U_h)$
 in the dual $\Sigma_h^*$ of $\Sigma_h$ represented by 
elements of $U_h$ and the form $b$
are precisely the elements of the polar set 
$Z_h^0$ of the kernel $Z_h$,
i.e., the bounded linear functionals over $\Sigma_h$ that 
vanish on $Z_h$.
From the inclusion of kernels
$Z_h\subseteq Z$ in Condition~\ref{cond:incl}
we have that, given any $u\in U$, the functional
$b(\cdot,u)\in\Sigma_h^*$ vanishes on $Z_h$
and therefore belongs to $Z_h^0$,
whence $b(\cdot,u)\in b(\cdot,U_h)$.
Hence, there exists some $u_h\in U_h$ with
$b(\cdot,u)=b(\cdot,u_h)$.
This proves the existence of the projection $P_h u:=u_h$.
The element $P_h u$ can be chosen from the range of the discrete
Riesz map $T_h:\Sigma_h\to U_h$ representing the from $b$
via $b(\tau_h,\cdot)=(T_h\tau_h,\cdot)_U$ on $U_h$ for any
$\tau_h\in\Sigma_h$. With this choice, the projection $P_h$ is linear.
Conversely,
it is immediate that the existence of the projection $P_h$ implies
Condition~\ref{cond:incl}.
\end{proof}

\begin{example}
The example relevant to the applications in this paper is
the following.
Given $\tau\in\Sigma$, let $T\tau\in U$
denote the Riesz representation of $b(\tau,\cdot)$
in $U$ such that
any $v\in U$ satisfies
$b(\tau,v) = (T\tau,v)_U$.
Provided the inclusion $T\Sigma_h\subseteq U_h$
is satisfied,
the projection $P_h:U\to U_h$
can be chosen as the orthogonal projection 
$P_h$ to $U_h$ with
respect to the inner product of $U$.
\end{example}

Let $\mathfrak P_h:\Sigma\to\Sigma_h$ denote the orthogonal projection
to the finite-dimensional space $\Sigma_h$ with respect to the inner product $a$.

\begin{lemma}\label{l:commut}
If Condition~\ref{cond:incl} holds, then
 any $v\in U_0$ satisfies $G_h P_h v = \mathfrak P_h G v$.
\end{lemma}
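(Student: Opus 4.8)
The plan is to verify the identity by testing both sides --- which belong to the finite-dimensional space $\Sigma_h$ --- against an arbitrary $\tau_h\in\Sigma_h$ in the inner product $a$. Since $a$ is positive definite on $\Sigma_h$, it suffices to show $a(G_h P_h v,\tau_h)=a(\mathfrak P_h Gv,\tau_h)$ for every $\tau_h\in\Sigma_h$.

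First I would address the right-hand side. Because $v\in U_0$, the element $Gv$ lies in the completion $\overline\Sigma$, and since $a$ extends continuously to $\overline\Sigma$ via the Cauchy--Schwarz inequality $|a(\sigma,\tau)|\leq\|\sigma\|_a\|\tau\|_a$, the orthogonal projection $\mathfrak P_h$ is well defined on $\overline\Sigma$ and still satisfies $a(\mathfrak P_h Gv,\tau_h)=a(Gv,\tau_h)$ for all $\tau_h\in\Sigma_h$. As $\tau_h\in\Sigma_h\subseteq\Sigma$, the defining relation \eqref{e:gradient} applies with $\tau=\tau_h$, giving $a(Gv,\tau_h)=-b(\tau_h,v)$, hence $a(\mathfrak P_h Gv,\tau_h)=-b(\tau_h,v)$.

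Next I would treat the left-hand side. By the definition of $G_h$ applied to $P_h v\in U_h\subseteq U$, one has $a(G_h P_h v,\tau_h)=-b(\tau_h,P_h v)$. Condition~\ref{cond:incl}, in the equivalent form \eqref{e:projP}, yields $b(\tau_h,P_h v)=b(\tau_h,v)$, so $a(G_h P_h v,\tau_h)=-b(\tau_h,v)$. Comparing the two computations gives $a(G_h P_h v-\mathfrak P_h Gv,\tau_h)=0$ for all $\tau_h\in\Sigma_h$, and since $G_h P_h v-\mathfrak P_h Gv\in\Sigma_h$ and $a$ is an inner product on $\Sigma_h$, the claim $G_h P_h v=\mathfrak P_h Gv$ follows.

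The argument is essentially a two-line Galerkin-orthogonality computation; the only point requiring a moment of care is that $\mathfrak P_h$ must be applied to $Gv$, which a priori lies only in the completion $\overline\Sigma$ and not in $\Sigma$, so one has to observe that the orthogonal projection onto the finite-dimensional subspace $\Sigma_h$ extends naturally to $\overline\Sigma$ and that its characterising relation persists there. No regularity or approximation properties of the discrete spaces enter --- only the inclusion of kernels through the projection $P_h$.
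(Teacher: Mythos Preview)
Your proof is correct and follows essentially the same route as the paper: both arguments test against $\tau_h\in\Sigma_h$ and chain the identities $a(G_hP_hv,\tau_h)=-b(\tau_h,P_hv)=-b(\tau_h,v)=a(Gv,\tau_h)=a(\mathfrak P_hGv,\tau_h)$, then conclude by positive definiteness of $a$ on $\Sigma_h$. Your additional remark that $\mathfrak P_h$ extends to the completion $\overline\Sigma$ is a welcome clarification that the paper leaves implicit.
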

\begin{proof}
 Given $v\in U_0$, the definition of $G_h$ and
 \eqref{e:projP}
 show for any $\tau_h\in\Sigma_h$ that
$$
   a(G_h P_h v,\tau_h)
   =
   -b(\tau_h,P_h v)
   =
   -b(\tau_h,v)
   =
   a(Gv,\tau_h).
$$
The last expression equals $a(\mathfrak P_h Gv,\tau_h)$
and the lemma ensues.
\end{proof}

\section{Approximation of the eigenvalue problem}\label{s:lower}

We adopt the setting of Section~\ref{s:commut}
with the forms $a$ and $b$, which are the ingredients of
problems in the well-known saddle-point structure.
It is assumed that $b$ satisfies the inf-sup condition
\eqref{e:infsup}.
It is additionally assumed that $U_0$ is dense in $U$.
Let furthermore
$$
 c,\ell:U\times U\to\mathbb R
$$
be two symmetric positive-semidefinite 
and bounded bilinear forms on $U$.
In order to exclude the totally trivial case $\ell=0$ 
(which would correspond to all eigenvalues equal to $+\infty$
in the system \eqref{e:evp} below) it is
assumed that there exists some $v_h\in U_h$ such that
$\ell(v_h,v_h)>0$ (which 
can be interpreted as a minimal resolution
condition on the discrete space).
The seminorms induced by $c$ and $\ell$ are denoted by
$\|\cdot\|_c = c(\cdot,\cdot)^{1/2}$ and
$\|\cdot\|_\ell = \ell(\cdot,\cdot)^{1/2}$.

The eigenvalue problem seeks eigenpairs
$(\lambda,u)\in\mathbb R\times U$ with nonzero $u$
such that
\begin{subequations}\label{e:evp}
\begin{align}\label{e:evp_a}
a(\sigma,\tau) + b(\tau,u) &= 0 &&\text{for all }\tau\in\Sigma\\
b(\sigma,v) -c(u,v)&= -\lambda \, \ell (u,v) 
       &&\text{for all }v \in U
.
\end{align}
\end{subequations}
Note that the variable $\sigma=Gu$ is determined by $u$
through \eqref{e:evp_a} and thus not treated as an independent
variable.
Recall Lemma~\ref{l:hilbert}, which states that $U_0$
is a Hilbert space.
Since, for any $v\in U_0$, $Gv$ and $v$ satisfy
$a(\sigma,Gv) = -b(\sigma,v)$
and the space $U_0$ is dense in $U$,
the eigenvalue problem is equivalent to seeking
eigenpairs $(\lambda,u)\in\mathbb R\times U_0$
satisfying, for all $v\in U_0$,
$$
\mathcal A(u,v) = \lambda\, \ell(u,v)
\quad\text{where}\quad \mathcal A(u,v):=a(Gu,Gv)+c(u,v).
$$
The left-hand side defines an inner product on $U_0$
and, hence, the eigenfunctions $u$ corresponding to
finite eigenvalues are $\mathcal A$-orthogonal
to the kernel of $\ell$ and will henceforth
be normalized as $\|u\|_\ell=1$.
We assume that the solution operator mapping
$f\in U$ to the solution $(\sigma,u)\in\Sigma\times U$
to the linear problem
\begin{align*}
a(\sigma,\tau) + b(\tau,u) &= 0 &&\text{for all }\tau\in\Sigma\\
b(\sigma,v) -c(u,v)&= -\ell (f,v) 
&&\text{for all }v \in U
\end{align*}
is a compact operator
(assuming the solution being measured in the 
norm $(\|u\|_U^2+\|\sigma\|_a^2)^{1/2}$).
Therefore, the finite part of the spectrum consists of
eigenvalues that have no finite accumulation point and can
be enumerated
$0<\lambda_1\leq\lambda_2\leq\dots$.
There exists an orthonormal set of corresponding eigenfunctions,
which will henceforth be referred to as ``the eigenfunctions''.
The Rayleigh quotient for the smallest eigenvalue
reads
$$
\lambda_1 = \min_{\substack{v\in U_0\setminus\{0\}\\ 
                  v\perp_{\mathcal A} \operatorname{ker}\ell}}
              \frac{\|Gv\|_a^2+\|v\|_c^2}{\|v\|_\ell^2}.
$$
Here, $\perp_{\mathcal A}$ denotes orthogonality with respect 
to $\mathcal A$
and $\operatorname{ker}\ell$ is the space of all
$v\in U$ with $\ell(v,v)=0$.
The higher eigenvalues satisfy analogous min-max
principles, 
the discrete version of which is displayed 
as \eqref{e:rayleigh_d_high} below.

For the choice of discrete spaces, we assume
that $b$ satisfies a discrete inf-sup condition
$$
0<\beta_h=
 \inf_{v_h\in U_h \setminus \{0\}}
 \sup_{\tau_h\in\Sigma_h\setminus \{0\}}
  \frac{b(\tau_h,v_h)}{\|\tau_h\|_\Sigma \|v_h\|_U} 
$$
with respect to the finite-dimensional spaces $\Sigma_h$ and $U_h$.
The discrete eigenvalue problem seeks discrete eigenpairs
$(\lambda_h,u_h)\in\mathbb R\times U_h$ with
$\|u_h\|_\ell=1$ such that
\begin{subequations}\label{e:devp}
\begin{align}
a(\sigma_h,\tau_h) + b(\tau_h,u_h) &= 0 
   &&\text{for all }\tau_h\in\Sigma_h \\
b(\sigma_h,v_h) -c(u_h,v_h)&= -\lambda_h \,\ell(u_h,v_h) 
  &&\text{for all }v_h\in U_h.
\end{align}
\end{subequations}
Analogous arguments as above show that
the discrete eigenvalue problem is equivalent to
$$
\mathcal A_h(u_h,v_h) = \lambda_h \,\ell(u_h,v_h)
\quad\text{where}\quad
\mathcal A_h(u_h,v_h):=a(G_h u_h,G_h v_h)+c(u_h,v_h).
$$
The bilinear form $\mathcal A_h$ on the left-hand side
defines an inner product on $U_h$.
The finite discrete eigenvalues are enumerated as
$0<\lambda_{1,h}\leq\lambda_{2,h}\leq\dots\leq\lambda_{N,h}$
for some positive integer $N$.
The first discrete eigenvalue minimizes the following
Rayleigh quotient
\begin{align*}
\lambda_{1,h} = \min_{\substack{v_h\in U_h\setminus\{0\}\\ 
                  v_h\perp_{\mathcal A_h} \operatorname{ker}\ell}}
              \frac{\|G_h v_h\|_a^2+\|v_h\|_c^2}{\|v_h\|_\ell^2}
\end{align*}
where $\perp_{\mathcal A_h}$ denotes orthogonality with respect 
to $\mathcal A_h$.
More generally,
the $J$th discrete eigenvalue satisfies the min-max principle
\cite{WeinsteinStenger1972}
\begin{equation}\label{e:rayleigh_d_high}
 \lambda_{J,h}
 =
\min_{\substack{V_J \subseteq  U_h
       \\ 
       \dim(V_J)=J, V_J\perp_{\mathcal A_h}\operatorname{ker}\ell}
      }
      \max_{v_h\in V_J\setminus\{0\}}
     \frac{\|G_h v_h \|_a^2 + \|v_h \|_c^2}{\|v_h\|_\ell^2} 
\end{equation}
where the minimum is taken over all $J$-dimensional subspaces of $U_h$
that are $\mathcal A_h$-orthogonal to the kernel of $\ell$.
Sufficient conditions on the spaces $\Sigma_h$ and $U_h$
such that the discrete eigenvalues $\lambda_{j,h}$ approximate
the true eigenvalues $\lambda_j$ are well known
\cite{Boffi2010}. The focus of this work is 
the computation
of guaranteed lower bounds to the eigenvalues $\lambda_j$
for an index $j\in\{1,\dots,J\}$.

The key condition required for the theory in this paper
is the following.

\begin{condition}\label{cond:main}
There exists a number $\delta_h=\delta_h(\Sigma_h,U_h)$ such
that any element
$u\in\operatorname{span}\{u_1,\dots,u_J\}$ in the linear hull
of the first $J$ eigenfunctions
satisfies
$$
 \|u-P_h u\|_\ell^2
 \leq \delta_h^2 ( \|Gu\|_a^2+\|u\|_c^2).
$$
\end{condition}

It is furthermore required that the projection $P_h$
is compatible with $c$ and $\ell$ in the following sense.
\begin{condition}\label{cond:proj}
Any element
$u\in\operatorname{span}\{u_1,\dots,u_J\}$ in the linear hull
of the first $J$ eigenfunctions satisfies
the Pythagorean identity
\begin{align}\label{e:proj_i}\tag{i}
 \|P_h u\|_\ell^2 + \|u-P_h u\|_\ell^2 = \|u\|_\ell^2,
\end{align}
the stability estimate
\begin{align}\label{e:proj_ii}\tag{ii}
\| P_h u\|_c^2 \leq \| u\|_c^2,
\end{align}
and the orthogonality
\begin{align}\label{e:proj_iii}\tag{iii}
  P_h u \perp_{\mathcal A_h}\operatorname{ker}\ell.
\end{align}
\end{condition}

\begin{remark}
 In all practical examples listed in this paper the 
 requirements from
 Condition~\ref{cond:proj} can be verified for 
 any $u\in U$.
\end{remark}

\begin{theorem}[abstract lower bound]\label{t:lowerbound}
 Let $\Sigma$, $U$ be Hilbert spaces with a symmetric and
 positive definite bilinear form $a$ on $\Sigma$ and a continuous
 bilinear form $b:\Sigma\times U\to\mathbb R$ such that
 the space $U_0\subseteq U$ of admissible right-hand
 sides for \eqref{e:gradient} is dense.
 Let the inf-sup condition \eqref{e:infsup},
 the inclusion of kernels from Condition~\ref{cond:incl}
 as well as Condition~\ref{cond:main} and Condition~\ref{cond:proj}
 be satisfied.
 Let $\Sigma_h\subseteq\Sigma$ and $U_h\subseteq U$ be
 an inf-sup stable pair of finite-dimensional subspaces
 and
 let $c$ and $\ell$ be continuous bilinear forms on $U$
 such that $\ell$ acts nontrivially on $U_h$
 and there exist at least $J$ finite 
 discrete eigenvalues $\lambda_{1,h},\dots,\lambda_{J,h}$
 to \eqref{e:devp}
 for a positive integer $J$.
 Let $\lambda_J$ denote the $J$th eigenvalue to
 \eqref{e:evp} with an $\ell$-normalized eigenfunction
 $u_J$.
 Then, the following lower bound for
  $\lambda_J$ holds
 $$
\frac{\lambda_{J,h}}{1+\delta_h^2\lambda_{J,h}} \leq  \lambda_J .
 $$
\end{theorem}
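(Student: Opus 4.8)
The plan is to bound $\lambda_{J,h}$ from above through the discrete min-max principle \eqref{e:rayleigh_d_high}, using as competitor the subspace $V_J:=P_h(E_J)\subseteq U_h$, where $E_J:=\operatorname{span}\{u_1,\dots,u_J\}\subseteq U_0$ is spanned by the first $J$ exact eigenfunctions. Two preliminary reductions are useful. First, the real function $t\mapsto t/(1+\delta_h^2 t)$ is increasing on $[0,\infty)$ and bounded above by $1/\delta_h^2$, so if $\delta_h^2\lambda_J\ge 1$ the claimed inequality follows trivially; hence it may be assumed that $\delta_h^2\lambda_J<1$. Second, since the eigenfunctions are $\ell$-orthonormal with $\mathcal A(u_j,u_k)=\lambda_j\,\ell(u_j,u_k)$ and $\mathcal A$-orthogonal to $\operatorname{ker}\ell$, every $u=\sum_{j=1}^J\alpha_j u_j\in E_J$ satisfies $\mathcal A(u,u)=\sum_{j=1}^J\alpha_j^2\lambda_j\le\lambda_J\,\|u\|_\ell^2$; moreover $\|u\|_\ell=0$ forces $u\in\operatorname{ker}\ell$ and then $\mathcal A(u,u)=0$, whence $u=0$ because $\mathcal A$ is an inner product on $U_0$.

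Next I would verify that $V_J$ is an admissible competitor in \eqref{e:rayleigh_d_high}, i.e.\ that it is $J$-dimensional and $\mathcal A_h$-orthogonal to $\operatorname{ker}\ell$. For injectivity of $P_h$ on $E_J$, suppose $P_h u=0$ for some $u\in E_J$. Then the Pythagorean identity \eqref{e:proj_i}, Condition~\ref{cond:main}, and the bound recorded above give
$$\|u\|_\ell^2=\|u-P_h u\|_\ell^2\le\delta_h^2\,\mathcal A(u,u)\le\delta_h^2\lambda_J\,\|u\|_\ell^2,$$
so $\|u\|_\ell=0$ since $\delta_h^2\lambda_J<1$, and therefore $u=0$ by the first paragraph; thus $\dim V_J=J$. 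The orthogonality $V_J\perp_{\mathcal A_h}\operatorname{ker}\ell$ follows immediately from the orthogonality property \eqref{e:proj_iii}, applied to each element $P_h u$ with $u\in E_J$.

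It then remains to estimate the discrete Rayleigh quotient on $V_J$. Fix $v_h=P_h u$ with $u\in E_J\setminus\{0\}$; then $v_h\ne 0$ and $\|u\|_\ell>0$ by the above. The commutation property $G_h P_h u=\mathfrak P_h Gu$ of Lemma~\ref{l:commut} (applicable since $u\in U_0$), the fact that the $a$-orthogonal projection $\mathfrak P_h$ is a contraction in $\|\cdot\|_a$, and the stability estimate \eqref{e:proj_ii} bound the numerator by
$$\|G_h v_h\|_a^2+\|v_h\|_c^2=\|\mathfrak P_h Gu\|_a^2+\|P_h u\|_c^2\le\|Gu\|_a^2+\|u\|_c^2=\mathcal A(u,u)\le\lambda_J\,\|u\|_\ell^2,$$
while the Pythagorean identity \eqref{e:proj_i} together with Condition~\ref{cond:main} and $\mathcal A(u,u)\le\lambda_J\|u\|_\ell^2$ bound the denominator from below by
$$\|v_h\|_\ell^2=\|u\|_\ell^2-\|u-P_h u\|_\ell^2\ge\|u\|_\ell^2-\delta_h^2\,\mathcal A(u,u)\ge(1-\delta_h^2\lambda_J)\,\|u\|_\ell^2>0.$$
Dividing these estimates shows that the Rayleigh quotient on $V_J$ does not exceed $\lambda_J/(1-\delta_h^2\lambda_J)$. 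Inserting $V_J$ into \eqref{e:rayleigh_d_high} gives $\lambda_{J,h}\,(1-\delta_h^2\lambda_J)\le\lambda_J$, and rearranging (legitimate since $\delta_h^2\lambda_J<1$) yields $\lambda_{J,h}\le\lambda_J(1+\delta_h^2\lambda_{J,h})$, which is the asserted bound.

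The argument is largely mechanical once Lemma~\ref{l:commut} and Conditions~\ref{cond:main}--\ref{cond:proj} are available; the step requiring the most care is the verification that the trial space $V_J$ genuinely has dimension $J$ and is $\mathcal A_h$-orthogonal to $\operatorname{ker}\ell$, so that it is a legitimate competitor in the discrete min-max principle \eqref{e:rayleigh_d_high}. A secondary point, handled in the first paragraph, is to treat the degenerate regime $\delta_h^2\lambda_J\ge 1$ separately and to keep track of the fact that $\ell$ is only a seminorm, so that nondegeneracy on $E_J$ has to be deduced from the $\mathcal A$-orthogonality of the eigenfunctions to $\operatorname{ker}\ell$.
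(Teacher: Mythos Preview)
Your proof is correct and follows essentially the same route as the paper: both use the trial space $P_h(E_J)$ in the discrete min--max principle \eqref{e:rayleigh_d_high}, bound the numerator via the commutation property of Lemma~\ref{l:commut} together with Condition~\ref{cond:proj}\eqref{e:proj_ii}, and bound the denominator via Condition~\ref{cond:proj}\eqref{e:proj_i} and Condition~\ref{cond:main}. The only difference is how the degenerate case is organized: the paper splits according to whether $(P_h u_j)_{j=1}^J$ is linearly independent and, in the dependent case, deduces $1/\delta_h^2\le\lambda_J$; you split according to whether $\delta_h^2\lambda_J\ge 1$ and, when $\delta_h^2\lambda_J<1$, deduce injectivity of $P_h$ on $E_J$ by the same estimate read contrapositively. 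The two case distinctions are logically equivalent, and your version is arguably slightly tidier since the trivial case is disposed of in one line.
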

\begin{proof}
Let $u_1,\dots,u_J$ denote a basis of 
$\ell$-normalized eigenfunctions corresponding to the eigenvalues
$\lambda_1,\dots,\lambda_J$.
In a first step, we assume that
the projected eigenfunctions $(P_h u_j:j=1,\dots,J)$
are linear independent.
Since the  projected eigenfunctions
are linear independent and, by Condition~\ref{cond:proj}(iii)
they are $\mathcal A_h$-orthogonal to the kernel of $\ell$,
they form an admissible $J$-dimensional subspace 
$\tilde V_J$
and there are coefficients $\alpha_1,\dots,\alpha_J$,
normalized to $\sum_j\alpha_j^2 =1$,
such that
$$
 v_h := P_h u \quad\text{for } u:=\sum_{j=1}^J \alpha_j u_j
$$
maximizes the Rayleigh quotient over $\tilde V_J$.
The discrete Rayleigh quotient \eqref{e:rayleigh_d_high},
elementary properties of the minimum,
Lemma~\ref{l:commut},
the nonexpansivity of orthogonal projections,
and Condition~\ref{cond:proj}\eqref{e:proj_ii}
imply
$$
\|v_h \|_\ell^2 \lambda_{J,h}
\leq
     \|G_h v_h \|_a^2 +\| v_h \|_c^2
 \leq 
     \Big\|\sum_{j=1}^J \alpha_j G u_j\Big\|_a^2 
   + \Big\|\sum_{j=1}^J \alpha_j u_j \Big\|_c^2
     =\lambda_J 
$$
because different eigenfunctions are mutually
$\mathcal A$-orthogonal.
Condition~\ref{cond:proj},
Condition~\ref{cond:main},
and the normalization of the coefficients $\alpha_j$
show
$$
 \|v_h \|_\ell^2
 =
 \|u\|_\ell^2-\|u-P_h u\|_\ell^2
 \geq
 1 -\delta_h^2\lambda_J.
$$
The combination of the two foregoing displayed formulas
results in
$$
(1-\delta_h^2\lambda_J)\lambda_{J,h}\leq \lambda_J.
$$
Rearranging this formula yields the asserted lower eigenvalue
bound.

In the remaining case that
the projected eigenfunctions $(P_h u_j:j=1,\dots,J)$
are linear dependent,
we apply an idea from \cite{TanakaTakayasuLiuOishi2014}.
There exist coefficients
$\beta_1,\dots,\beta_J$ with $\sum_{j=1}^J\beta_j^2=1$
and a linear combination
$u:=\sum_{j=1}^J\beta_j u_j$ with $\|u\|_\ell=1$
and $P_hu=0$.
Condition~\ref{cond:main} implies
$$
1=\|u\|_\ell^2
=
\|u-P_hu\|_\ell^2
\leq 
 \delta_h^2 ( \|Gu\|_a^2+\|u\|_c^2)
\leq 
\delta_h^2 \lambda_J.
$$
The last inequality follows from the $\mathcal A$-orthogonality
of the eigenfunctions and the normalization of the 
coefficients $\beta_j$.
Since, in particular, $\delta_h>0$, we infer
$1/\delta_h^2\leq\lambda_J$.
Elementary estimates lead to
$$
\frac{\lambda_{J,h}}{1+\delta_h^2\lambda_{J,h}}
\leq 
\frac{\lambda_{J,h}}{\delta_h^2\lambda_{J,h}}
=
\frac{1}{\delta_h^2}
\leq \lambda_J,
$$
which implies the asserted lower eigenvalue bound.
\end{proof}

\section{Application to the Laplacian}\label{s:lapl}

This section is to fix notation and to present
the application of Theorem~\ref{t:lowerbound}
to the eigenvalues of the Laplacian.
Let $\Omega\subseteq\mathbb R^n$ be an open, bounded,
connected,
polytopal Lipschitz domain.
The eigenvalue problem for the Dirichlet-Laplacian
seeks eigenpairs $(\lambda,u)$ with
$$
 -\Delta u = \lambda u \quad\text{in }\Omega
 \quad u=0 \text{ on }\partial\Omega
$$
where the eigenfunction $u\in H^1_0(\Omega)\setminus\{0\}$
belongs to the first-order $L^2$-based Sobolev space
with vanishing trace on the boundary,
and the Laplacian $\Delta$ is understood in the sense of
weak derivatives.
The mixed formulation 
is based on the choice
$$
 \Sigma=H(\ddiv,\Omega) \quad\text{and}\quad
 U=L^2(\Omega)
$$
where $L^2(\Omega)$ is the space of square-integrable
measurable functions and $H(\ddiv,\Omega)$ is the space
of vector fields over $\Omega$ whose components as well as
distributional divergence belong to $L^2(\Omega)$.
The inner product in (any power of) $L^2(\Omega)$
is denoted by $(\cdot,\cdot)_{L^2(\Omega)}$.
The form $a$ is defined as
the $L^2$ inner product of vector fields,
$a(\cdot,\cdot):=(\cdot,\cdot)_{L^2(\Omega)}$,
and
$b$ is defined by
$$
   b(\tau,v) := (\ddiv\tau,v)_{L^2(\Omega)}
  \quad\text{for any }(\tau,v)\in\Sigma\times U.
$$
With the choice $c=0$ and 
$\ell(\cdot,\cdot)=(\cdot,\cdot)_{L^2(\Omega)}$,
it is well known \cite{Boffi2010,BoffiBrezziFortin2013} that
the eigenvalues of the Laplacian correspond to those of
system \eqref{e:evp}
and that the form $b$ satisfies the inf-sup condition
\eqref{e:infsup}.

Let $\Sigma_h$ and $U_h$ be an inf-sup stable pair of
finite-dimensional spaces
with the property $\ddiv\Sigma_h\subseteq U_h$,
which guarantees Condition~\ref{cond:incl}.
It is assumed that the discrete spaces are related to
a partition $\mathcal T$ of $\bar\Omega$ in convex polytopes
(for example a simplicial triangulation)
and that the space $P_0(\mathcal T)$ of piecewise constant
functions over $\mathcal T$ is contained in $U_h$.
The most prominent example of such a pair is the choice
of $\Sigma_h$ as the lowest-order Raviart--Thomas space
with respect to $\mathcal T$ and $U_h=P_0(\mathcal T)$,
but many other choices are possible \cite{BoffiBrezziFortin2013}.
Since the piecewise constants are contained in $U_h$,
we have
$$
 \| u-P_h u\|_{U}
 \leq
 \| u-\Pi_{0,h} u\|_{L^2(\Omega)}
$$
for the $L^2$ projection $\Pi_{0,h}$ onto the piecewise constants.
Each element $T\in\mathcal T$ of the partition
is convex, whence the constant of the Poincar\'e inequality
is explicitly known \cite{PayneWeinberger1960}
and equals $h_T/\pi$ with the diameter
$h_T:=\operatorname{diam}(T)$ of $T$.
Therefore
$$
 \| u-P_h u\|_{U}
 \leq 
 \frac{h}{\pi} \| \nabla u\|_{L^2(\Omega)}
=
 \frac{h}{\pi} \| Gu\|_{L^2(\Omega)}
$$
for the maximal element diameter $h:=\max_{T\in\mathcal T} h_T$,
where it has been used that $u\in U_0$ possesses a weak
gradient in $[L^2(\Omega)]^n$.
This verifies Condition~\ref{cond:main} with
$\delta_h = h/\pi$.
Condition~\ref{cond:proj} is trivially satisfied
because $\ell$ is the $L^2$ inner product and $c=0$.
In conclusion, Theorem~\ref{t:lowerbound} applies
and the resulting lower bound for the Laplacian reads as follows.

\begin{corollary}[guaranteed lower eigenvalue bound for the Laplacian]
\label{c:lap}
Assume the above setting for the mixed formulation of
the Dirichlet-Laplacian.
Let $\Sigma_h\subseteq\Sigma$, $U_h\subseteq U$ be an inf-sup
stable pair of finite-dimensional subspaces related to a
partition $\mathcal T$ in convex polytopes
with $\mathcal P_0(\mathcal T)\subseteq U_h$
and $\ddiv\Sigma_h\subseteq U_h$.
Then, the $J$th eigenvalue $\lambda_J$ 
of \eqref{e:evp} and the $J$th
discrete eigenvalue $\lambda_{J,h}$ of \eqref{e:devp} satisfy
 $$
\frac{\lambda_{J,h}}{1+(h^2/\pi^{2})\lambda_{J,h}}
\leq  \lambda_J .
$$
\end{corollary}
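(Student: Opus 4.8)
The plan is to obtain Corollary~\ref{c:lap} by verifying that the mixed formulation of the Dirichlet--Laplacian fits the abstract framework of Theorem~\ref{t:lowerbound}; all the ingredients are already collected in the preceding paragraphs, so the proof amounts to checking each hypothesis with the data $\Sigma=H(\ddiv,\Omega)$, $U=L^2(\Omega)$, $a=(\cdot,\cdot)_{L^2(\Omega)}$ on vector fields, $b(\tau,v)=(\ddiv\tau,v)_{L^2(\Omega)}$, $c=0$, and $\ell=(\cdot,\cdot)_{L^2(\Omega)}$. First I would record the classical inf-sup condition \eqref{e:infsup} for the pair $H(\ddiv,\Omega)$--$L^2(\Omega)$ \cite{BoffiBrezziFortin2013}, and then identify the abstract gradient: since $a$ is the $L^2$ inner product of vector fields, $\overline\Sigma=[L^2(\Omega)]^n$, and $v\in U_0$ means that $(Gv,\tau)_{L^2(\Omega)}=-(\ddiv\tau,v)_{L^2(\Omega)}$ holds for all $\tau\in H(\ddiv,\Omega)$, which by testing first with smooth compactly supported $\tau$ and then with general $\tau$ forces $v\in H^1_0(\Omega)$ and $Gv=\nabla v$. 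Hence $U_0=H^1_0(\Omega)$, which is dense in $L^2(\Omega)$, and the required compactness of the solution operator is the Rellich embedding $H^1_0(\Omega)\hookrightarrow L^2(\Omega)$; in particular the eigenvalue enumeration used in Theorem~\ref{t:lowerbound} is available once $\dim U_h\geq J$.

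Next I would verify Condition~\ref{cond:incl} and exhibit $P_h$. From $\ddiv\Sigma_h\subseteq U_h$ one gets for $\tau_h\in Z_h$ that $\|\ddiv\tau_h\|_{L^2(\Omega)}^2=b(\tau_h,\ddiv\tau_h)=0$, so $\ddiv\tau_h=0$ and therefore $b(\tau_h,v)=0$ for every $v\in L^2(\Omega)$; this is the inclusion $Z_h\subseteq Z$. Moreover the Riesz representation of $b(\tau,\cdot)$ in $L^2(\Omega)$ is $T\tau=\ddiv\tau$, so the hypothesis of the Example reads $T\Sigma_h=\ddiv\Sigma_h\subseteq U_h$ and is satisfied; accordingly $P_h$ may be chosen as the $L^2(\Omega)$-orthogonal projection onto $U_h$, and Lemma~\ref{l:commut} applies.

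The only quantitative step is Condition~\ref{cond:main} with $\delta_h=h/\pi$. Since $P_0(\mathcal T)\subseteq U_h$ and $P_h$ is the $L^2$-orthogonal projection, the best-approximation property gives $\|u-P_h u\|_\ell\leq\|u-\Pi_{0,h}u\|_{L^2(\Omega)}$ with the elementwise $L^2$ projection $\Pi_{0,h}$ onto constants. On each (convex) element $T$ the Payne--Weinberger inequality \cite{PayneWeinberger1960} yields $\|w-\Pi_{0,T}w\|_{L^2(T)}\leq(h_T/\pi)\|\nabla w\|_{L^2(T)}$; summing over $T\in\mathcal T$ and using that any $u\in\operatorname{span}\{u_1,\dots,u_J\}\subseteq U_0=H^1_0(\Omega)$ has weak gradient $\nabla u=Gu$ gives $\|u-P_h u\|_\ell^2\leq(h/\pi)^2\|Gu\|_a^2=(h/\pi)^2(\|Gu\|_a^2+\|u\|_c^2)$, which is Condition~\ref{cond:main}. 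Finally Condition~\ref{cond:proj} is immediate: the Pythagorean identity~(i) holds because $\ell$ is the $L^2$ inner product and $P_h$ is the $L^2$-orthogonal projection, (ii) is trivial since $c=0$, and (iii) is vacuous since $\ell$ is positive definite so $\operatorname{ker}\ell=\{0\}$. With all hypotheses in place, Theorem~\ref{t:lowerbound} applied with $\delta_h=h/\pi$ gives the asserted bound. The \emph{main obstacle} is the quantitative Poincar\'e step, but it is resolved by the known explicit Payne--Weinberger constant for convex bodies, so no further estimation is needed; the remaining verifications are routine identifications of the abstract objects with their Sobolev-space counterparts.
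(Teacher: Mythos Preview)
Your proposal is correct and follows essentially the same approach as the paper: verify the abstract hypotheses of Theorem~\ref{t:lowerbound} for the mixed Laplacian, with the only quantitative input being the Payne--Weinberger constant $h/\pi$ for the elementwise Poincar\'e inequality on convex polytopes, yielding $\delta_h=h/\pi$. You supply somewhat more detail than the paper (explicit identification $U_0=H^1_0(\Omega)$ with $G=\nabla$, the density and Rellich compactness checks, and the direct verification of $Z_h\subseteq Z$ from $\ddiv\Sigma_h\subseteq U_h$), but the argument is the same.
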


\begin{remark}\label{r:bessel}
It is known that for $n=2$ and triangular partitions
the constant of the Poincar\'e inequality can be
slightly improved \cite{LaugensenSiudeja2010}.
In this case, $\pi^2$ in Corollary~\ref{c:lap}
can be replaced by $j_{1,1}^2$ where $j_{1,1}$ is the 
first root of the Bessel function of the first kind.
\end{remark}

\begin{example}\label{ex:lapl}
Consider the first eigenvalue of the Dirichlet-Laplacian
on the L-shaped domain $(-1,1)^2\setminus[0,1]^2$.
Let $\mathcal T$ be a triangulation of $\Omega$
and let
$\Sigma_h$ be the lowest-order Raviart--Thomas finite element
space \cite{BoffiBrezziFortin2013},
which is the subspace of $H(\ddiv,\Omega)$ of vector
fields that, when restricted to any $T\in\mathcal T$,
are linear combinations of constants and the identity
$x\mapsto x$.
The corresponding space $U_h=P_0(\mathcal T)$ is the space 
of piecewise constants.
The initial triangulation is displayed in
Figure~\ref{f:L}.
Table~\ref{tab:laplace} displays
the discrete eigenvalue, the guaranteed lower bound
from Corollary~\ref{c:lap},
and an upper bound computed with a first-order conforming
FEM
on a sequence of uniformly refined meshes.
The computed bound is that from Corollary~\ref{c:lap}
and disregards the slight improvement mentioned in 
Remark~\ref{r:bessel}.

\begin{figure}
\begin{tikzpicture}
  \draw (-1,-1)--(1,-1)--(1,0)--(0,0)--(0,1)--(-1,1)--cycle;
  \draw (0,1)--(-1,0)--(1,0)--(0,-1)--(0,1);
  \draw (-1,-1)--(0,0);
  \draw (-.5,.5)--(-.5,1)--(-1,.5)--(-.5,.5)--(0,.5)--(-.5,0)--cycle;
 \draw (-.5,-.5)--(-.5,-1)--(0,-.5)--(-.5,-.5)--(-.5,0)--(-1,-.5)--cycle;
 \draw (.5,-.5)--(.5,-1)--(1,-.5)--(.5,-.5)--(0,-.5)--(.5,0)--cycle;
 \end{tikzpicture}
 \caption{Initial triangulation of the L-shaped domain.
          \label{f:L}}
\end{figure}
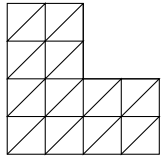

\begin{table}
 \begin{tabular}{cccc}
  $h\times\sqrt{2}$ & $\lambda_{1,h}$ & lower bound & upper bound\\
  \hline   
   $2^0$    &  8.60144 &  5.99088 &  13.1991  \\
   $2^{-1}$ &  9.25186 &  8.28147 &  10.5739  \\
   $2^{-2}$ &  9.49208 &  9.21512 &  9.91654  \\
   $2^{-3}$ &  9.58268 &  9.51054 &  9.72837  \\
   $2^{-4}$ &  9.61746 &  9.59919 &  9.66981  \\
 \end{tabular}
\caption{Results for the Laplace eigenvalues on the L-shaped domain.
\label{tab:laplace}}
\end{table}
\end{example}

\section{Scalar elliptic operator}\label{s:elliptic}
As a generalization of the eigenvalue problem from the
previous section we consider the eigenvalue problem
\begin{align}\label{e:scalarellipt}
 -\ddiv (A\nabla u) + \gamma u
   = \lambda u\quad\text{in }\Omega
 \quad u=0 \text{ on }\partial\Omega .
\end{align}
Here $A$ is a symmetric matrix field over $\Omega$
with $L^\infty(\Omega)$ coefficients satisfying the bounds
$$
  a_0 |\xi|^2 \leq \xi^T A \xi \leq a_1 |\xi|^2
 \quad\text{for any }\xi\in\mathbb R^n
 \quad\text{a.e. in }\Omega
$$
with real numbers $0<a_0\leq a_1 <\infty$;
and $\gamma\in L^\infty(\Omega)$ is a 
nonnegative function
with $0\leq\gamma_0\leq\gamma\leq\gamma_1$ almost everywhere.
As in the Laplacian case, the spaces for the mixed
formulation are 
$$
 \Sigma=H(\ddiv,\Omega) \quad\text{and}\quad
 U=L^2(\Omega) .
$$
For simplicity it is assumed that $A$ and $\gamma$ are
piecewise constant with respect to a given triangulation
$\mathcal T$, which will also be used for the discretization.

The  mixed formulation is based on the
substitution $\sigma=A\nabla u$.
The form $a$ is defined as
$a(\cdot,\cdot):=(\cdot,A^{-1}\cdot)_{L^2(\Omega)}$,
and
$b$ is defined by
$$
   b(\tau,v) := (\ddiv\tau,v)_{L^2(\Omega)}
  \quad\text{for any }(\tau,v)\in\Sigma\times U.
$$
With the choice 
$c(\cdot,\cdot)=(\cdot,\gamma\cdot)_{L^2(\Omega)}$
and
$\ell(\cdot,\cdot)=(\cdot,\cdot)_{L^2(\Omega)}$,
it is not difficult to verify that 
system~\eqref{e:evp} is then
inf-sup stable and equivalent to
the original problem \eqref{e:scalarellipt}.

Let $\Sigma_h$ and $U_h$ be an inf-sup stable pair of
finite-dimensional spaces
with the property $\ddiv\Sigma_h\subseteq U_h$.
It is again assumed that the discrete spaces are related to
a partition $\mathcal T$ of $\bar\Omega$ in convex polytopes
and that $U_h$ contains the piecewise constant functions
$P_0(\mathcal T)$.
In addition, it is assumed that $U_h$ does not include a constraint
on inter-element continuity.
More precisely, it is assumed that $U_h$ is of the structure
\begin{equation}
\label{e:prodstruct}
  U_h = \prod_{T\in\mathcal T} V_T
\end{equation}
where $V_T$ is a subspace of $L^2(T)$ and the embedding 
$L^2(T)\subseteq L^2(\Omega)$ is understood through extensions by zero.
This assumption ensures that the $L^2$ projection to $U_h$
localizes to the elements of $\mathcal T$.
This property is used
for the verification of the stability property
from Condition~\ref{cond:proj}\eqref{e:proj_ii}:
Since
the $L^2$ projection onto $U_h$ equals the local $L^2$ projection,
we have
$$
\| P_h u\|_c^2
\leq
\sum_{T\in\mathcal T} \gamma|_T
 \left\|\Pi_{h,T}  u \right\|_{L^2(T)}^2
\leq 
\sum_{T\in\mathcal T} \gamma|_T \|u\|_{L^2(T)}^2
=
\| u\|_c^2 .
$$
For verifying Condition~\ref{cond:main}
and determining the constant,
we use the local Poincar\'e inequality and infer
$$
\| u-P_h u\|_\ell^2
=
\| u-\Pi_{0,h}u\|_{L^2(\Omega)}^2
\leq 
\frac{h^2}{\pi^2} \| \nabla u\|_{L^2(\Omega)}^2
\leq 
\frac{h^2}{a_0\pi^2} \| A^{1/2}\nabla u\|_{L^2(\Omega)}^2
$$
so that Condition~\ref{cond:main} is satisfied with
$\delta_h=h/(a_0^{1/2}\pi)$.
Theorem~\ref{t:lowerbound} therefore implies the following
result.

\begin{corollary}[guaranteed lower eigenvalue bound for elliptic operators]
\label{c:ellipt}
Assume the above setting for the mixed formulation of
\eqref{e:scalarellipt}.
Let $\Sigma_h\subseteq\Sigma$, $U_h\subseteq U$ be an inf-sup
stable pair of finite-dimensional subspaces related to a
partition $\mathcal T$ in convex polytopes
with $\mathcal P_0(\mathcal T)\subseteq U_h$
and $\ddiv\Sigma_h\subseteq U_h$
where $U_h$ is a space without interelement continuity
requirements in the sense of the structure from
\eqref{e:prodstruct}.
Then, the $J$th eigenvalue $\lambda_J$ 
of \eqref{e:evp} and the $J$th
discrete eigenvalue $\lambda_{J,h}$ of \eqref{e:devp} satisfy
 $$
\frac{\lambda_{J,h}}{1+\lambda_{J,h} h^2/(a_0\pi^2)}
\leq  \lambda_J .
$$
\end{corollary}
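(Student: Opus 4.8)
The plan is simply to check, one by one, that the concrete spaces and forms of this section fall under the abstract framework of Theorem~\ref{t:lowerbound}, most of this verification having already been carried out in the paragraphs preceding the corollary.

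First I would dispatch the structural hypotheses of Theorem~\ref{t:lowerbound}. The form $b(\tau,v)=(\ddiv\tau,v)_{L^2(\Omega)}$ satisfies the inf-sup condition \eqref{e:infsup} on $H(\ddiv,\Omega)\times L^2(\Omega)$ because $\ddiv$ maps $H(\ddiv,\Omega)$ onto $L^2(\Omega)$ with a bounded right inverse; an integration by parts identifies $U_0$ with $H^1_0(\Omega)$, on which $Gu=A\nabla u$ and $\|Gu\|_a^2=\|A^{1/2}\nabla u\|_{L^2(\Omega)}^2$, so $U_0$ is dense in $U=L^2(\Omega)$ and the associated source operator is compact by elliptic regularity together with the Rellich--Kondrachov theorem; equivalence of \eqref{e:evp} with \eqref{e:scalarellipt} and the identification $\mathcal A(u,v)=(A\nabla u,\nabla v)_{L^2(\Omega)}+(\gamma u,v)_{L^2(\Omega)}$ follow exactly as in the Laplacian case, and here $\ker\ell=\{0\}$ since $\ell$ is the full $L^2$ inner product.

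Next I would verify the inclusion of kernels and identify $P_h$. Since $\ddiv\Sigma_h\subseteq U_h$, any $\tau_h$ in the discrete kernel $Z_h$ may be tested against $v_h=\ddiv\tau_h\in U_h$, giving $\|\ddiv\tau_h\|_{L^2(\Omega)}^2=0$ and hence $\tau_h\in Z$; this is Condition~\ref{cond:incl}. Moreover the Riesz representation in $L^2(\Omega)$ of $b(\tau,\cdot)$ is $\ddiv\tau$, so $T\Sigma_h=\ddiv\Sigma_h\subseteq U_h$ and, by the Example in Section~\ref{s:commut}, the projection $P_h$ may be taken as the $L^2$-orthogonal projection $\Pi_h$ onto $U_h$. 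With this choice the Pythagorean identity \eqref{e:proj_i} is automatic, and \eqref{e:proj_iii} is vacuous because $\ker\ell=\{0\}$; the stability \eqref{e:proj_ii}, namely $\|\Pi_h u\|_c\le\|u\|_c$, is the element-wise computation displayed before the corollary, which uses the product structure \eqref{e:prodstruct} to localize $\Pi_h$ and the fact that $\gamma$ is constant on each $T\in\mathcal T$ so that the $L^2(T)$-contractivity of the local projection passes to the $\gamma$-weighted seminorm. Finally, Condition~\ref{cond:main} holds with $\delta_h=h/(a_0^{1/2}\pi)$: this is the local Poincar\'e estimate already carried out, combining $\mathcal P_0(\mathcal T)\subseteq U_h$, the Payne--Weinberger constant $h_T/\pi$ for convex $T$, and the ellipticity bound $a_0|\xi|^2\le\xi^TA\xi$.

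With all hypotheses in place, Theorem~\ref{t:lowerbound} yields $\lambda_{J,h}/(1+\delta_h^2\lambda_{J,h})\le\lambda_J$, and substituting $\delta_h^2=h^2/(a_0\pi^2)$ gives the asserted bound. The only genuinely delicate point is \eqref{e:proj_ii}: it relies both on $U_h$ carrying no interelement continuity constraint --- so that $\Pi_h$ decouples over $\mathcal T$ --- and on $\gamma$ being piecewise constant with respect to the same partition; if $\gamma$ varied within an element, the local $L^2$-projection would in general fail to be a contraction in the weighted seminorm, and either a modified argument or an enlarged constant $\delta_h$ absorbing $\gamma_1/\gamma_0$-type factors would be required.
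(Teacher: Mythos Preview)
Your proposal is correct and follows essentially the same route as the paper: it verifies the hypotheses of Theorem~\ref{t:lowerbound} (inf-sup stability, density of $U_0$, inclusion of kernels via $\ddiv\Sigma_h\subseteq U_h$, identification of $P_h$ with the $L^2$-orthogonal projection, Condition~\ref{cond:proj} via the product structure and piecewise constant $\gamma$, and Condition~\ref{cond:main} via the local Poincar\'e inequality with $\delta_h=h/(a_0^{1/2}\pi)$), and then invokes the abstract bound. Your write-up is in fact more explicit than the paper's about several points (why $U_0=H^1_0(\Omega)$, why $\ker\ell=\{0\}$ makes \eqref{e:proj_iii} vacuous, and why \eqref{e:proj_ii} genuinely needs both the piecewise-constant assumption on $\gamma$ and the product structure of $U_h$), but the underlying argument is identical.
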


In the case that the lower bound $\gamma_0$ to the 
low-order coefficient $\gamma$ in the elliptic eigenvalue
problem is positive, one can take advantage of a
spectral shift and obtain a sharper lower bound.

\begin{corollary}[guaranteed lower eigenvalue bound for elliptic operators with shift]
\label{c:ellipt_shift}
Under the assumptions of Corollary~\ref{c:ellipt},
the $J$th eigenvalue $\lambda_J$ 
of \eqref{e:evp} and the $J$th
discrete eigenvalue $\lambda_{J,h}$ of \eqref{e:devp} satisfy
 $$
\frac{\lambda_{J,h}}{1+(\lambda_{J,h}-\gamma_0) h^2/(a_0\pi^2)}
+
\gamma_0 \frac{(\lambda_{J,h}-\gamma_0) h^2/(a_0\pi^2)}{1+(\lambda_{J,h}-\gamma_0) h^2/(a_0\pi^2)}
\leq  \lambda_J .
$$
\end{corollary}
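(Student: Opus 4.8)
The plan is to derive Corollary~\ref{c:ellipt_shift} from Corollary~\ref{c:ellipt} by a spectral shift. Since $\gamma\geq\gamma_0$ almost everywhere, the coefficient $\tilde\gamma:=\gamma-\gamma_0$ is nonnegative, bounded, and piecewise constant with respect to $\mathcal T$, hence it meets the standing assumptions of Section~\ref{s:elliptic} with vanishing lower bound $\tilde\gamma_0=0$ and with the same ellipticity constants $a_0\leq a_1$ for $A$. Subtracting $\gamma_0 u$ from both sides of \eqref{e:scalarellipt} shows that $(\lambda,u)$ solves \eqref{e:scalarellipt} if and only if $(\lambda-\gamma_0,u)$ solves the shifted problem $-\ddiv(A\nabla u)+\tilde\gamma u=(\lambda-\gamma_0)u$ with homogeneous Dirichlet data. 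Its mixed formulation uses the same spaces $\Sigma,U,\Sigma_h,U_h$ and the same forms $a$ and $b$ (so in particular the operators $G$, $G_h$ and the projection $P_h$ are unchanged, and Condition~\ref{cond:incl} still follows from $\ddiv\Sigma_h\subseteq U_h$); only $c$ is replaced by $\tilde c:=c-\gamma_0\ell$, whereas $\ell$ stays the $L^2$ inner product.

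The next step is to track the shift on the continuous and the discrete spectra. Because $a$ and $b$ are untouched, the energy forms of the shifted problem are $\widetilde{\mathcal A}=\mathcal A-\gamma_0\ell$ on $U_0$ and $\widetilde{\mathcal A}_h=\mathcal A_h-\gamma_0\ell$ on $U_h$, so $\widetilde{\mathcal A}(u,v)=\nu\,\ell(u,v)$ for all $v$ is equivalent to $\mathcal A(u,v)=(\nu+\gamma_0)\ell(u,v)$ for all $v$, and likewise on $U_h$. Hence the shifted problem has the same (continuous and discrete) eigenfunctions as \eqref{e:evp}--\eqref{e:devp}, while its eigenvalues are $\lambda_j-\gamma_0$ and $\lambda_{j,h}-\gamma_0$; in particular it possesses the $J$ finite discrete eigenvalues $\lambda_{1,h}-\gamma_0,\dots,\lambda_{J,h}-\gamma_0$, which are nonnegative because the discrete Rayleigh quotient obeys $\|v_h\|_c^2=(v_h,\gamma v_h)_{L^2(\Omega)}\geq\gamma_0\|v_h\|_\ell^2$. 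The $\ell$-normalised $J$th eigenfunction $u_J$ is unchanged and its shifted eigenvalue is $\lambda_J-\gamma_0$.

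It remains to confirm that Corollary~\ref{c:ellipt} applies to the shifted problem and to unwind the bound. The constant in Condition~\ref{cond:main} was obtained in Section~\ref{s:elliptic} using only the local Poincar\'e inequality together with $\|A^{1/2}\nabla u\|_{L^2(\Omega)}^2=\|Gu\|_a^2\leq\|Gu\|_a^2+\|u\|_{\tilde c}^2$, with no reference to the low-order term, so it is again $\delta_h=h/(a_0^{1/2}\pi)$. Condition~\ref{cond:proj}(ii) holds for $\tilde c$ because $\tilde\gamma\geq0$ and, by the product structure \eqref{e:prodstruct}, the $L^2$ projection onto $U_h$ localises to the elements of $\mathcal T$; parts (i) and (iii) involve only $\ell$ and are therefore unchanged, and the inf-sup structure, density of $U_0$, and compactness are all inherited. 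Corollary~\ref{c:ellipt}, applied to the shifted problem, thus yields
$$
\frac{\lambda_{J,h}-\gamma_0}{1+(\lambda_{J,h}-\gamma_0)h^2/(a_0\pi^2)}\leq\lambda_J-\gamma_0 .
$$
Abbreviating $\mu:=(\lambda_{J,h}-\gamma_0)h^2/(a_0\pi^2)\geq0$ and adding $\gamma_0$ to both sides gives $\lambda_J\geq\gamma_0+(\lambda_{J,h}-\gamma_0)/(1+\mu)=(\lambda_{J,h}+\gamma_0\mu)/(1+\mu)=\lambda_{J,h}/(1+\mu)+\gamma_0\,\mu/(1+\mu)$, which is exactly the two-term expression claimed.

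The only delicate point I anticipate is the bookkeeping of the shift on the discrete side, i.e.\ confirming that $\lambda_{j,h}-\gamma_0$ is genuinely the $j$th discrete eigenvalue of the shifted system (rather than the discrete problem degenerating); this follows directly from the identity $\widetilde{\mathcal A}_h=\mathcal A_h-\gamma_0\ell$ above. Beyond that, the argument consists of checking the unchanged hypotheses of Corollary~\ref{c:ellipt} and an elementary algebraic rearrangement, so I do not expect a genuine obstacle.
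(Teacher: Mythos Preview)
Your argument is correct and follows essentially the same approach as the paper: apply a spectral shift by $\gamma_0$, note that the continuous and discrete eigenvalues are translated by $\gamma_0$, apply Corollary~\ref{c:ellipt} to the shifted problem with nonnegative potential $\gamma-\gamma_0$, and rearrange after adding $\gamma_0$ back. Your write-up is more explicit than the paper's in verifying the hypotheses; one small imprecision is that Condition~\ref{cond:proj}\eqref{e:proj_iii} does involve $\mathcal A_h$ (which changes under the shift), but since $\ell$ is the $L^2$ inner product here its kernel is trivial and the condition is vacuous anyway.
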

\begin{proof}
The eigenvalues $\hat\lambda$ of the shifted problem
 \begin{align*}
 -\ddiv (A\nabla u) + (\gamma -\gamma_0) u
   = \hat \lambda u\quad\text{in }\Omega
 \quad u=0 \text{ on }\partial\Omega 
\end{align*}
are related to the ones of \eqref{e:scalarellipt}
by $\hat\lambda_j+\gamma_0=\lambda_j$ and an analogous shift
property applies to the discrete problem
so that $\hat\lambda_{j,h}+\gamma_0=\lambda_{j,h}$.
Since $\gamma-\gamma_0$ is nonnegative, 
Corollary~\ref{c:ellipt} applies and proves
 $$
\frac{\hat \lambda_{J,h}}{1+\hat\lambda_{J,h}h^2/(a_0\pi^2)}
\leq  \hat\lambda_J .
$$
Equivalently,
 $$
\frac{\lambda_{J,h}-\gamma_0}{1+(\lambda_{J,h}-\gamma_0)h^2/(a_0\pi^2)}
+\gamma_0
\leq \lambda_J .
$$
This implies the asserted lower bound.
\end{proof}

\begin{example}\label{ex:scalarellipt}
On the square domain $\Omega=(-1,1)^2$ choose the coefficients
$$
 A(x) = \left(2+ \frac{x_1 x_2}{|x_1|\,|x_2|}\right) I_{2\times 2}
 \quad\text{and}\quad
 \gamma(x) = 4 + 1_{\{|x_2|>1/2\}}
$$
where $I_{2\times 2}$ is the two-dimensional unit matrix.
The lower bounds on the coefficients read $a_0=1$ and $\gamma_0=4$.
The coefficients and the initial triangulation
are displayed in Figure~\ref{f:scalarcoeff}.
Table~\ref{tab:elliptic} compares the discrete eigenvalues,
the guaranteed lower bound
from Corollary~\ref{c:ellipt_shift},
and upper bounds from a conforming
standard FEM on a sequence of uniformly refined meshes.

\begin{figure}
 \begin{tikzpicture}
  \draw (-1,-1)--(1,-1)--(1,1)--(-1,1)--cycle;
  \draw[dashed] (0,-1)--(0,1) (-1,0)--(1,0);
  \node at (-.5,-.5) {3};
  \node at (-.5,.5) {1};
  \node at (.5,.5) {3};
  \node at (.5,-.5) {1};
 \end{tikzpicture}
 \quad
\begin{tikzpicture}
  \draw (-1,-1)--(1,-1)--(1,1)--(-1,1)--cycle;
  \draw[dashed] (-1,-.5)--(1,-.5) (-1,.5)--(1,.5);
  \node at (0,.75) {5};
  \node at (0,0) {4};
  \node at (0,-.75) {5};
 \end{tikzpicture}
 \quad
\begin{tikzpicture}
  \draw (-1,-1)--(1,-1)--(1,1)--(-1,1)--(-1,1)--cycle;
  \draw (0,1)--(-1,0)--(1,0)--(0,-1)--(0,1);
  \draw (-1,-1)--(1,1);
  \draw (-.5,.5)--(-.5,1)--(-1,.5)--(-.5,.5)--(0,.5)--(-.5,0)--cycle;
 \draw (.5,.5)--(.5,1)--(0,.5)--(.5,.5)--(.5,0)--(1,.5)--cycle;
 \draw (-.5,-.5)--(-.5,-1)--(0,-.5)--(-.5,-.5)--(-.5,0)--(-1,-.5)--cycle;
 \draw (.5,-.5)--(.5,-1)--(1,-.5)--(.5,-.5)--(0,-.5)--(.5,0)--cycle;
 \end{tikzpicture}
 \caption{Coefficients $A$ (left)
        and $\gamma$ (middle)
        and the initial triangulation (right)
        in Example~\ref{ex:scalarellipt}.
          \label{f:scalarcoeff}}
\end{figure}
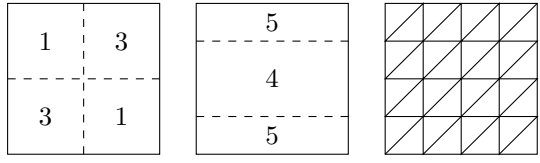

\begin{table}
 \begin{tabular}{cccc}
  $h\times\sqrt{2}$ & $\lambda_{1,h}$ & lower bound & upper bound\\
  \hline   
   $2^0$    &  13.4656 &  10.3977 &  15.4049 \\
   $2^{-1}$ &  13.4010 &  12.4008 &  13.9124 \\
   $2^{-2}$ &  13.3898 &  13.1187 &  13.5205 \\
   $2^{-3}$ &  13.3877 &  13.3185 &  13.4207 \\
   $2^{-4}$ &  13.3873 &  13.3699 &  13.3956 \\
 \end{tabular}
\caption{Results for eigenvalue problem \eqref{e:scalarellipt} of the 
         scalar elliptic operator on the square domain.
\label{tab:elliptic}}
\end{table}

\end{example}

\section{Application to linear elasticity}\label{s:elast}
Let $\Omega$ be a domain as in the previous sections
with a disjoint partition $\partial\Omega=\Gamma_D\cup\Gamma_N$
of the boundary where $\Gamma_D$ is assumed, for simplicity
of the presentation, to have positive surface measure.
For the sake of a simple exposition, it is furthermore assumed
that the parts $\Gamma_D$ and $\Gamma_N$ are resolved by 
the boundary faces of some underlying polytopal partition
$\mathcal T$ of $\bar\Omega$.
The linear elasticity eigenvalue problem seeks eigenvalues
$\lambda$ and vector-valued eigenfunctions $u\neq0$ such 
that
$$
 -\ddiv \mathbb C\varepsilon (u) = \lambda u
\quad\text{in }\Omega
\qquad\text{and}\qquad
u=0\quad\text{on }\Gamma_D
\qquad
\mathbb C\varepsilon (u)\mathbf n=0\quad\text{on }\Gamma_N.
$$
Here, 
$\mathbf n$ is the outward pointing unit vector to $\Gamma_N$,
$\varepsilon (u) = \frac12 (D u + (Du)^T)$ is the 
symmetric part of the derivative matrix,
and the elasticity tensor $\mathbb C$ reads
$$
  \mathbb C (A) 
   = 2\mu A 
   + \kappa \operatorname{tr}(A) I_{n\times n}
\quad\text{for any symmetric matrix }A
$$
for given material parameters $\mu,\kappa>0$
and the $n$-dimensional unit matrix $I_{n\times n}$.
The action of the divergence to a $n\times n$ matrix field is
understood row-wise.
The mixed formulation is based on the space
$\Sigma:=H_{\Gamma_N}(\ddiv,\Omega;\mathbb S)$ of symmetric matrix fields
$\sigma$
whose rows belong to $H(\ddiv,\Omega)$ and that
satisfy the homogeneous Neumann boundary condition
$\sigma\mathbf n =0$ on $\Gamma_N$;
and 
$U:=[L^2(\Omega)]^n$.
The bilinear forms $a$, $b$ are defined as
$a(\cdot,\cdot):=(\cdot,\mathbb C^{-1}\cdot)_{L^2(\Omega)}$,
and
$$
   b(\tau,v) := (\ddiv\tau,v)
  \quad\text{for any }(\tau,v)\in\Sigma\times U
$$
while $c=0$ and 
$\ell(\cdot,\cdot)=(\cdot,\cdot)_{L^2(\Omega)}$.
It is known that this is an inf-sup stable formulation of the 
linear elastic eigenvalue problem
\cite{ArnoldWinther2002,ArnoldFalkWinther2006,BoffiBrezziFortin2013}.

Let $\Sigma_h$ and $U_h$ be an inf-sup stable pair of
finite-dimensional spaces
with $\ddiv\Sigma_h\subseteq U_h$.
Instances of such spaces with pointwise symmetry for the 
stress field can be based on piecewise polynomials
\cite{ArnoldWinther2002,ArnoldFalkWinther2006}
or on piecewise rational trial functions
\cite{GuzmanNeilan2014}.
It is assumed that the discrete spaces are related to
a partition $\mathcal T$ of $\bar\Omega$ in convex polytopes
and that the space 
$$
 \Big\{ v\in [L^2(\Omega)]^n:
   \text{for any }T\in\mathcal T,
    D(v|_T)
   \text{ is constant and skew-symmetric}
 \Big\}
$$
of piecewise infinitesimal rigid body motions 
with respect to $\mathcal T$ is contained in $U_h$.
Since the infinitesimal rigid-body motions on an element
$\mathcal T$ include all
constants, the Poincar\'e inequality yields on any $T\in\mathcal T$
for the $L^2$ projection $\Pi_{\mathrm{RM},h}$ onto the 
piecewise infinitesimal rigid-body motions,
$$
\|u-\Pi_{\mathrm{RM},h} u\|_{L^2(T)}
\leq 
\frac{h_T}{\pi} \|D (u-\Pi_{\mathrm{RM},h}u)\|_{L^2(T)}.
$$
Korn's inequality on $T$ with constant $C_K(T)$ then yields
$$
\|u-\Pi_{\mathrm{RM},h} u\|_{L^2(T)}
\leq 
\frac{h_T}{\pi} \|D (u-\Pi_{\mathrm{RM},h}u)\|_{L^2(T)}
\leq 
\frac{C_K(T)h_T}{\pi} \|\varepsilon (u)\|_{L^2(T)}.
$$
Thus, with $1/(2\mu)$ as the smallest eigenvalue of the 
elasticity tensor $\mathbb C$ and
$$
\delta_h:=\frac{\max_{T\in\mathcal T} C_K(T) h_T}{\sqrt{2\mu}\pi}
$$
it follows that
$$
\|u-P_h u\|_{U}^ 2
\leq 
\frac{\max_{T\in\mathcal T} C_K(T)^2 h_T^2}{\pi^2}
 \|\varepsilon (u)\|_{L^2(\Omega)}^2
\leq 
\delta_h^2
 \|\mathbb C^{1/2}\varepsilon (u)\|_{L^2(\Omega)}^2
$$
which verifies Condition~\ref{cond:main}.

\begin{corollary}[guaranteed lower eigenvalue bound for elasticity]
\label{c:elast}
Assume the above setting for the mixed formulation of
the elasticity system.
Let $\Sigma_h\subseteq\Sigma$, $U_h\subseteq U$ be an inf-sup
stable pair of finite-dimensional subspaces related to a
partition $\mathcal T$ in convex polytopes
with $\ddiv\Sigma_h\subseteq U_h$
where $U_h$ contains the piecewise infinitesimal
rigid-body motions.
Then, the $J$th eigenvalue $\lambda_J$ 
of \eqref{e:evp} and the $J$th
discrete eigenvalue $\lambda_{J,h}$ of \eqref{e:devp} satisfy
 $$
\frac{\lambda_{J,h}}
 {1+(\max_{T\in\mathcal T} C_K(T)^2 h_T^2)/(2\mu\pi^2))\lambda_{J,h}}
\leq  \lambda_J .
$$
\end{corollary}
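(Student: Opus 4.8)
The plan is to verify that the Hellinger--Reissner mixed formulation of the elasticity system fits the abstract framework of Theorem~\ref{t:lowerbound} and then to read off the bound with the constant $\delta_h$ already identified in the preceding discussion. First I would record the structural hypotheses. The form $a=(\cdot,\mathbb C^{-1}\cdot)_{L^2(\Omega)}$ is symmetric and positive definite on $\Sigma=H_{\Gamma_N}(\ddiv,\Omega;\mathbb S)$ because $\mathbb C$ has smallest eigenvalue $2\mu>0$ (and largest eigenvalue $2\mu+n\kappa$), so that $\|\cdot\|_a$ is equivalent to the $L^2$ norm of the matrix field; the form $b(\tau,v)=(\ddiv\tau,v)_{L^2(\Omega)}$ is continuous; and the inf-sup condition \eqref{e:infsup} is the classical stability estimate for this formulation \cite{ArnoldWinther2002,ArnoldFalkWinther2006,BoffiBrezziFortin2013}, where the assumption that $\Gamma_D$ has positive surface measure rules out global rigid-body modes. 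By Korn's inequality the space $U_0$ of admissible right-hand sides for \eqref{e:gradient} is the space of fields in $[L^2(\Omega)]^n$ with symmetric gradient in $L^2(\Omega;\mathbb S)$ and vanishing trace on $\Gamma_D$; this space is dense in $U=[L^2(\Omega)]^n$ and embeds compactly, so the solution operator is compact and the finite part of the spectrum of \eqref{e:evp} is an increasing sequence $0<\lambda_1\le\lambda_2\le\cdots$, whence $\lambda_J$ is well defined.

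Next I would treat the kernel conditions and the projection. Since $b(\tau,v)=(\ddiv\tau,v)_{L^2(\Omega)}$, the Riesz representation of $b(\tau,\cdot)$ in $U$ is $T\tau=\ddiv\tau$, and the assumption $\ddiv\Sigma_h\subseteq U_h$ yields $T\Sigma_h\subseteq U_h$; by the example in Section~\ref{s:commut} the projection $P_h$ can therefore be taken as the $L^2$-orthogonal projection onto $U_h$. Condition~\ref{cond:incl} also holds, for if $\tau_h\in Z_h$ then testing $b(\tau_h,\cdot)=0$ against $\ddiv\tau_h\in U_h$ gives $\|\ddiv\tau_h\|_{L^2(\Omega)}^2=0$, hence $\ddiv\tau_h=0$ and $\tau_h\in Z$. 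With this choice of $P_h$, Condition~\ref{cond:proj} is immediate for every $u\in U$: since $\ell=(\cdot,\cdot)_{L^2(\Omega)}$ is the full $L^2$ inner product, $\operatorname{ker}\ell=\{0\}$ and \eqref{e:proj_iii} is vacuous; $c=0$ makes \eqref{e:proj_ii} trivial; and \eqref{e:proj_i} is the Pythagorean identity for the $L^2$-orthogonal projection, $\|P_hu\|_\ell^2+\|u-P_hu\|_\ell^2=\|u\|_\ell^2$.

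It remains to recall Condition~\ref{cond:main}, which has already been established in the discussion above: the piecewise infinitesimal rigid-body motions form a subspace of $U_h$ which is, by its definition, a product over the elements of $\mathcal T$, so the $L^2$ projection $\Pi_{\mathrm{RM},h}$ onto it acts elementwise, and $\|u-P_hu\|_\ell\le\|u-\Pi_{\mathrm{RM},h}u\|_{L^2(\Omega)}$; applying on each convex $T\in\mathcal T$ first the Poincaré inequality with constant $h_T/\pi$ and then Korn's inequality with constant $C_K(T)$, and using that $2\mu$ is the smallest eigenvalue of $\mathbb C$ so that $2\mu\,\|\varepsilon(u)\|_{L^2(\Omega)}^2\le\|Gu\|_a^2$, one arrives at Condition~\ref{cond:main} with $\delta_h=(\max_{T\in\mathcal T}C_K(T)h_T)/(\sqrt{2\mu}\,\pi)$. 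All hypotheses of Theorem~\ref{t:lowerbound} now hold (with $c=0$, so that $\|Gu_J\|_a^2=\lambda_J$ for an $\ell$-normalized eigenfunction $u_J$), and the theorem delivers $\lambda_{J,h}/(1+\delta_h^2\lambda_{J,h})\le\lambda_J$, which is the claimed bound once $\delta_h^2=(\max_{T\in\mathcal T}C_K(T)^2h_T^2)/(2\mu\pi^2)$ is inserted. There is essentially no remaining analytic difficulty; the one point worth care is that it is the rigid-motion projection $\Pi_{\mathrm{RM},h}$, and not $P_h$ itself, that acts elementwise so that the local Poincaré--Korn estimates can be summed, which is why the hypothesis is phrased through the inclusion of the piecewise rigid-body motions in $U_h$ rather than a product structure of $U_h$ as in \eqref{e:prodstruct}.
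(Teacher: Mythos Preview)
Your proposal is correct and takes essentially the same approach as the paper: the corollary is stated without a separate proof because the preceding discussion has already verified Condition~\ref{cond:main} with $\delta_h=(\max_{T\in\mathcal T}C_K(T)h_T)/(\sqrt{2\mu}\,\pi)$, and the remaining conditions (inclusion of kernels via $\ddiv\Sigma_h\subseteq U_h$, Condition~\ref{cond:proj} trivially from $c=0$ and $\ell$ being the full $L^2$ inner product) are immediate, so Theorem~\ref{t:lowerbound} applies directly. Your write-up simply makes these verifications more explicit than the paper does, but the argument is the same.
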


For practical and guaranteed bounds, upper bounds on the local
Korn constants $C_K(T)$ are needed.
In two dimensions, upper bounds for $C_K(T)$ 
on convex polygons can be explicitly computed
from the bound
on the continuity constant of a right-inverse of the 
divergence operator
available in \cite[Section~5.1.2]{CostabelDauge2015}.
It is known that the latter has a close relation to the 
Korn constant, and the precise argument is given as follows.
 Let $\omega\subseteq\mathbb R^n$ be an open, bounded,
 connected Lipschitz domain.
 It is well known 
 \cite{AcostaDuran2017}
 that there exists a constant
 $C_{\ddiv}<\infty$ such that any $p\in L^2(\Omega)$ with 
 $\int_\omega p\,dx =0$ can be represented as
 $p=\ddiv v$ with a vector field $v\in [H^1_0(\Omega)]^n$
 with $\|Dv\|_{L^2(\Omega)}\leq C_{\ddiv} \|p\|_{L^2(\Omega)}$.
If $\omega\subseteq\mathbb R^2$ is in addition a convex planar polygon
with corners $z_1,\dots,z_m$,
a fixed (arbitrary) interior point $x_0\in\omega$,
and the geometric parameter
\begin{align}
 \label{e:param_d}
 d:=\frac{\operatorname{dist}(x_0,\partial\omega)}
        {\max_{j=1,\dots,m} |x_0 - z_j|}, 
\end{align}
then the following bound
provided by \cite{CostabelDauge2015}
is valid
\begin{equation}\label{e:Cdivbound}
C_{\ddiv}
\leq 
\sqrt{
\frac{2}{d^2} (1+\sqrt{1-d^2})}  .
\end{equation}
In what follows, we will always choose
$x_0$ as the centre of the largest incribed ball of the
polyhedron.
Then, for example, any right-isosceles triangle $\hat T$ satisfies
$d(\hat T)= 1/\sqrt{4+2\sqrt{2}}$
and, accordingly,
$$
C_{\ddiv}(\hat T) \leq 5.1259 .
$$

The following lemma shows how the bound
on $C_{\ddiv}$ can be used for
bounding the Korn constant in two space dimensions.
The usual rotation of two-dimensional vector fields 
reads $\rot v = \partial_2 v_1 - \partial_1 v_2$.
\begin{lemma}[explicit bound on local Korn inequality in 2D]
 \label{l:korn}
 Let $\omega\subseteq\mathbb R^2$ be a bounded, open, convex
 polygon with the geometric parameter $d$ from \eqref{e:param_d}
 and let
 $v\in [H^1(\omega)]^2$ be a vector field with
 $\int_\omega \rot v \,dx = 0$.
 Then 
 $$
\|Dv\|_{L^2(\omega)}
 \leq 
\sqrt{1+\frac{4}{d^2} (1+\sqrt{1-d^2})}
\|\varepsilon (v)\|_{L^2(\omega)}.
$$
\end{lemma}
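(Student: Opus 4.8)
The plan is to decompose $Dv$ into its symmetric and antisymmetric parts. In two space dimensions the antisymmetric part of $Dv$ equals $\tfrac12(\rot v)\,J$ with the $\pi/2$-rotation matrix $J=\begin{pmatrix}0&1\\-1&0\end{pmatrix}$, and since symmetric and skew-symmetric matrices are orthogonal for the Frobenius inner product, one has the pointwise identity $|Dv|^2=|\varepsilon(v)|^2+\tfrac12|\rot v|^2$, hence
\[
 \|Dv\|_{L^2(\omega)}^2=\|\varepsilon(v)\|_{L^2(\omega)}^2+\tfrac12\|\rot v\|_{L^2(\omega)}^2 .
\]
So it suffices to establish $\|\rot v\|_{L^2(\omega)}\le 2\,C_{\ddiv}\,\|\varepsilon(v)\|_{L^2(\omega)}$ with the divergence-right-inverse constant $C_{\ddiv}$ recalled above; inserting the bound \eqref{e:Cdivbound} then yields $\|Dv\|_{L^2(\omega)}^2\le(1+2C_{\ddiv}^2)\|\varepsilon(v)\|_{L^2(\omega)}^2\le\bigl(1+\tfrac{4}{d^2}(1+\sqrt{1-d^2})\bigr)\|\varepsilon(v)\|_{L^2(\omega)}^2$, which is the assertion.

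To bound $\|\rot v\|_{L^2(\omega)}$ I would use that $\rot v\in L^2(\omega)$ has vanishing integral over $\omega$: by the right inverse of the divergence there is $w\in[H^1_0(\omega)]^2$ with $\ddiv w=\rot v$ and $\|Dw\|_{L^2(\omega)}\le C_{\ddiv}\|\rot v\|_{L^2(\omega)}$, so that $\|\rot v\|_{L^2(\omega)}^2=\int_\omega\rot v\,\ddiv w\,dx$. The heart of the argument is the integration-by-parts identity
\[
 \int_\omega\rot v\,\ddiv w\,dx=2\int_\omega\varepsilon(v):K(w)\,dx,
 \quad
 K(w):=\begin{pmatrix}\partial_2 w_1 & \tfrac12(\partial_2 w_2-\partial_1 w_1)\\[3pt]\tfrac12(\partial_2 w_2-\partial_1 w_1) & -\partial_1 w_2\end{pmatrix}.
\]
To prove it, expand $\rot v=\partial_2 v_1-\partial_1 v_2$ and $\ddiv w=\partial_1 w_1+\partial_2 w_2$, so that $\int_\omega\rot v\,\ddiv w$ becomes a sum of four integrals, each pairing a first-order derivative of $v$ with one of $w$. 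In the two summands where the two derivation indices differ one applies the elementary identity $\int_\omega(\partial_i f)(\partial_j g)\,dx=\int_\omega(\partial_j f)(\partial_i g)\,dx$, valid for $f\in H^1(\omega)$ and $g\in H^1_0(\omega)$ because $(\partial_i f)(\partial_j g)-(\partial_j f)(\partial_i g)=\partial_i(f\partial_j g)-\partial_j(f\partial_i g)$ integrates to a boundary term that vanishes for $g$ smooth with compact support in $\omega$, the general case following by density. After substituting and collecting terms -- routine bookkeeping -- the mixed derivatives $\partial_2 v_1$ and $\partial_1 v_2$ combine into components of $\varepsilon(v)$ and one arrives at $2\int_\omega\varepsilon(v):K(w)$.

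Granting the identity, the Cauchy--Schwarz inequality gives $\|\rot v\|_{L^2(\omega)}^2\le 2\|\varepsilon(v)\|_{L^2(\omega)}\|K(w)\|_{L^2(\omega)}$. Together with the pointwise estimate $|K(w)|^2=(\partial_2 w_1)^2+(\partial_1 w_2)^2+\tfrac12(\partial_2 w_2-\partial_1 w_1)^2\le|Dw|^2$ (using $\tfrac12(a-b)^2\le a^2+b^2$) and $\|Dw\|_{L^2(\omega)}\le C_{\ddiv}\|\ddiv w\|_{L^2(\omega)}=C_{\ddiv}\|\rot v\|_{L^2(\omega)}$ this yields $\|\rot v\|_{L^2(\omega)}\le 2C_{\ddiv}\|\varepsilon(v)\|_{L^2(\omega)}$, and the first paragraph finishes the proof.

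I expect the main obstacle to be the integration-by-parts identity, and specifically obtaining it with the sharp factor $2$: a crude symmetrization of the mixed terms loses a constant and would give only $\sqrt5\,C_{\ddiv}$ in place of $2C_{\ddiv}$, destroying the claimed bound. One must also be careful with the ``swap'' identity, since $v$ is only assumed in $H^1(\omega)$ and not in $H^1_0(\omega)$, so the vanishing of all boundary contributions has to be tied to the $H^1_0$-factor alone and the limited Sobolev regularity handled by approximation.
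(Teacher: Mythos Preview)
Your proof is correct and structurally the same as the paper's: both establish the orthogonal decomposition $\|Dv\|_{L^2}^2=\|\varepsilon(v)\|_{L^2}^2+\tfrac12\|\rot v\|_{L^2}^2$, then prove the key bound $\|\rot v\|_{L^2}\le 2C_{\ddiv}\|\varepsilon(v)\|_{L^2}$ by constructing $w\in[H^1_0(\omega)]^2$ with $\ddiv w=\rot v$ via the right inverse of the divergence, and finally insert the Costabel--Dauge bound \eqref{e:Cdivbound}. The difference is only in the packaging of the middle step. The paper rotates the gradient, setting $\tau^\perp:=(Dv)J^{-1}$ (in your notation), observes that $\tau^\perp$ is divergence-free with $\operatorname{tr}\tau^\perp=-\rot v$ and $|\operatorname{dev}\tau^\perp|=|\varepsilon(v)|$, and then invokes the standard dev--div estimate \cite[Proposition~9.1.1]{BoffiBrezziFortin2013}; you instead unroll that argument by hand with the explicit matrix $K(w)$ and the swap identity $\int\partial_i f\,\partial_j g=\int\partial_j f\,\partial_i g$ for $g\in H^1_0$. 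Your route is more self-contained, while the paper's makes the link to the Stokes theory transparent; both yield the identical constant $\sqrt{1+2C_{\ddiv}^2}$.
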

\begin{proof}
 The tensor field $\tau = Dv$ is irrotational,
 which is equivalent to the fact that the field
 $\tau^\perp:= (-\tau_{12},\tau_{11}; -\tau_{22},\tau_{21})$
 is divergence-free.
 Further, the property $\int_\omega \rot v \,dx= 0$ is
 equivalent to $\int_\omega \operatorname{tr}\tau^\perp\,dx =0$.
 By a classical argument \cite[Proposition~9.1.1]{BoffiBrezziFortin2013}
 it can be shown that 
 $$
  \|\operatorname{tr}\tau^\perp\|_{L^2(\Omega)}
  \leq 
  2C_{\ddiv} 
   \|\tau^\perp - \frac12 \operatorname{tr}\tau^\perp I \|_{L^2(\Omega)}
 .
 $$
 This implies, with
 $
  I_\perp:=
  \left(\begin{smallmatrix}
     0&-1\\1&0
    \end{smallmatrix}\right)
 $
that
$$
  \|\tau_{21}-\tau_{12}\|_{L^2(\Omega)}
  \leq 
  2C_{\ddiv} 
   \|\tau - \frac12 (\tau_{21}-\tau_{12}) 
    I_\perp
 \|_{L^2(\Omega)}
  = 2C_{\ddiv} \|\frac12 (\tau+\tau^T)\|_{L^2(\Omega)}
 $$
 so that the skew-symmetric part of $\tau$ is controlled by
 the symmetric part of $\tau$.
 From the orthogonality of symmetric and skew-symmetric matrices
 we then infer with $\varepsilon(v)=\frac12 (\tau+\tau^T)$ that
 $$
  \|\tau\|_{L^2(\Omega)}^2
  =
  \|\varepsilon (v)\|_{L^2(\Omega)}^2
  +
  \|\frac12 (\tau_{21}-\tau_{12}) I_\perp
  \|_{L^2(\Omega)}^2
 \leq 
  (1+2C_{\ddiv}^2) \|\varepsilon (v)\|_{L^2(\Omega)}^2.
 $$
 The asserted estimate then follows with the bound
\eqref{e:Cdivbound} on $C_{\ddiv}$.
\end{proof}

Lemma~\ref{l:korn} shows that the Korn constant
on a convex polygon $\omega$ can be bounded 
as
$$
 C_K\leq \sqrt{1+\frac{4}{d^2} (1+\sqrt{1-d^2})} .
$$
For example,
given a right-isosceles triangle $\hat T$, the value
of the Korn constant satisfies the bound
$$
C_K(\hat T) \leq 7.318 .
$$

\begin{remark}
 For domains with sufficiently regular boundary,
 there exists a sharper alternative to the the bound of Lemma~\ref{l:korn},
 see \cite{HorganPayne1983} and the references therein.
\end{remark}

\begin{example}
The Dirichlet boundary $\Gamma_D$ of Cook's membrane 
$\Omega\subseteq \mathbb R^2$
is given by the straight line from 
$(0,0)$ to $(0,44)$.
The domain $\Omega$ is given as the interior of the convex combination 
of $\Gamma_D$ with the points $(48,44)$ and $(48,60)$,
and, accordingly, $\Gamma_N=\partial\Omega\setminus\Gamma_D$ is
the Neumann boundary.
The domain with its initial triangulation is displayed in
Figure~\ref{f:cook}.
In the numerical example, the Lam\'e parameters are
chosen as $\mu=1$ and $\kappa=100$.
The spaces $\Sigma_h$ and $U_h$ are taken according to the
Arnold--Winther finite element method
\cite{ArnoldWinther2002} based on a regular triangulation
$\mathcal T$, that is,
$\Sigma_h$ is the subspace of $\Sigma$ consisting of symmetric
matrix fields whose components,
when restricted to any triangle of $T$, are at most cubic polynomials
on $T$, and whose divergence is piecewise affine,
while $U_h$ is the space of piecewise affine vector fields.
Table~\ref{tab:elast} displays
the discrete eigenvalue, the guaranteed lower bound
from Corollary~\ref{c:lap},
and an upper bound computed with a first-order conforming
FEM
on a sequence of uniformly refined meshes.
It should be remarked that the optimal order of convergence
of the Arnold--Winther method is better than linear,
so that in general the guaranteed lower bound including the 
global mesh size $h$ is expected to be sub-optimal
if sufficient smoothness of the eigenfunctions is available.
This effect is not visible in this experiment because of the 
Dirichlet--Neumann corners in the configuration of the boundary,
which lead to reduced regularity.
There exist lower-order methods respecting the symmetry of stresses
(see, e.g., \cite{ArnoldWinther2002,GuzmanNeilan2014}),
but their implementation is not necessarily easier compared with
the usual Arnold--Winther finite element.

 \begin{figure}
 \begin{tikzpicture}[scale=.05]
    \coordinate (a1) at (0.00000,0.00000);
    \coordinate (a2) at (22.36692,   20.50301);
    \coordinate (a3) at (32.62352,   29.90490);
    \coordinate (a4) at (43.01651,   39.43180);
    \coordinate (a5) at (48.00000,   44.00000);
    \coordinate (a6) at (48.00000,   52.00000);
    \coordinate (a7) at (48.00000,   60.00000);
    \coordinate (a8) at (34.18905,   55.39635);
    \coordinate (a9) at (20.96553,   50.98851);
    \coordinate (a10) at (7.91553 ,  46.63851);
    \coordinate (a11) at (0.00000 ,  44.00000);
    \coordinate (a12) at (0.00000 ,  22.00000);
    \coordinate (a13) at (13.34761,   30.34226);
    \coordinate (a14) at (25.34761,   37.84226);
    \coordinate (a15) at (37.50718,   45.44199);  
    \draw (a1)--(a13)--(a12)--cycle
   (a13)--(a1)--(a2)--cycle
    (a2)--(a14)--(a13)--cycle
   (a14)--(a2)--(a3)--cycle
    (a3)--(a15)--(a14)--cycle
   (a15)--(a3)--(a4)--cycle
    (a5)--(a15)--(a4)--cycle
   (a15)--(a5)--(a6)--cycle
    (a7)--(a15)--(a6)--cycle
   (a15)--(a7)--(a8)--cycle
    (a9)--(a15)--(a8)--cycle
   (a15) --(a9)--(a14)--cycle
   (a10)--(a14)--(a9)--cycle
   (a14)--(a10)--(a13)--cycle
   (a11)--(a13)--(a10)--cycle
   (a13)--(a11)--(a12)--cycle;
\draw[very thick] (0,0)--(0,44);
\draw[very thick,dashed] (0,44)--(48,60)--(48,44)--(0,0);
\node at (0,22)[left]{$\Gamma_D$};
\node at (24,22)[below right]{$\Gamma_N$};
 \end{tikzpicture}
\caption{Cook's membrane with initial trianulation.
        \label{f:cook}}
\end{figure}
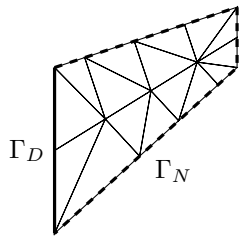
 \begin{table}
 \begin{tabular}{cccc}
  $h$ & $\lambda_{1,h}$ & lower bound & upper bound\\
  \hline   
   33.14 &   5.62812e-4 &  4.40411e-5  & 1.00153e-3  \\
   16.57 &   5.68410e-4 &  1.43028e-4  & 9.07262e-4  \\
   8.287 &   5.71724e-4 &  3.27099e-4  & 7.68356e-4  \\
   4.143 &   5.73587e-4 &  4.82990e-4  & 6.58116e-4  \\
   2.071 &   5.74507e-4 &  5.48734e-4  & 6.05333e-4  \\
 \end{tabular}
\caption{Results for the elasticity eigenvalues on 
          Cook's membrane.
\label{tab:elast}}
\end{table}
 \end{example}

\begin{remark}
A similar reasoning yields lower eigenvalue bounds for
the Stokes system, which corresponds to the formal
limit $\kappa\to\infty$.
It is known that the mixed formulation of the Lam\'e
system is robust (locking-free) with respect to this limit.
\end{remark}

\begin{remark}
 The technique from \cite{CostabelDauge2015} for bounding
 $C_{\ddiv}$ extends to domains $\omega$ that are star-shaped with respect
 to all points of some open nonempty ball $B\subseteq \omega$.
\end{remark}

\begin{remark}
 The stated bounds on Korn's constant do not apply to 
 three-dimen\-sional element domains.
 Upper bounds can be numerically computed with the 
 method of \cite{Gallistl2021}, but their theoretical
 justification relies on (asymptotic) assumptions
 the verification of which turns out difficult in practice.
\end{remark}

\section{Application to the Steklov eigenvalue problem}\label{s:steklov}

Let $\Omega\subseteq\mathbb R^n$ be a domain as in prior
sections with outer unit normal $\mathbf n$.
As a model problem we consider the problem of finding
$(\lambda,w)$ with nontrivial $w$ such that
\begin{align*}
 \hspace{10ex}
 -\Delta w + w &=0 & & \hspace{-10ex}\text{in }\Omega &
\\
\hspace{10ex}
    \frac{\partial w}{\partial \mathbf{n}} &= \lambda w 
  & &\hspace{-10ex}\text{on }\partial\Omega . &\\
\end{align*}
The eigenvalue relation on the boundary subject to a
homogeneous linear partial differential equation in
the domain is related to the spectrum of a
Dirichlet-to-Neumann map.
The standard variational formulation is posed in the 
Sobolev space $H^1(\Omega)$.
Since no dual mixed formulation has been studied in the
literature so far, it is explained here in more detail than
the classical models of the foregoing sections.
The idea is to introduce the variables 
$\sigma=\nabla w$ and $(w,\gamma)$ with
$\gamma=w|_{\partial\Omega}$.
Let $\Sigma:= H_\Gamma(\ddiv,\Omega)$ be the subspace
of all $\tau\in H(\ddiv,\Omega)$ whose normal trace
$\tau\cdot\mathbf n|_{\partial\Omega}$ belongs to
$L^2(\partial\Omega)$, equipped with the norm
$$
 \|\tau\|_{H_\Gamma(\ddiv,\Omega)}
 :=
\sqrt{ \|\tau\|_{H(\ddiv,\Omega)}^2 
    + \|\tau\cdot\mathbf n\|_{L^2(\partial\Omega)}^2}
$$
and let 
$U:= L^2(\Omega)\times L^2(\partial\Omega)$.
Let $a(\cdot,\cdot):=(\cdot,\cdot)_{L^2(\Omega)}$
be chosen as the $L^2$ product
and let
$$
 b(\tau,(v,\eta)) :=
 (\ddiv\tau,v)_{L^2(\Omega)} 
  - (\tau\cdot \mathbf n,\eta)_{L^2(\partial\Omega)}.
$$
With $c(\cdot,\cdot):=(\cdot,\cdot)_{L^2(\Omega)}$
and 
$\ell(\cdot,\cdot):=(\cdot,\cdot)_{L^2(\partial\Omega)}$,
the Steklov eigenvalue problem can then be rewritten 
as system~\eqref{e:evp}.
For convenience, the eigenvalue problem is explicitly
rewritten in the following:
Seek $(\sigma, (w,\gamma))\in\Sigma\times U$ 
and
$\lambda\in\mathbb R$
such that
\begin{align*}
\begin{array}{lclclcl}
(\sigma,\tau)_{L^2(\Omega)} 
&+& (\ddiv\tau,w)_{L^2(\Omega)} 
&-& (\tau\cdot\mathbf n,\gamma)_{L^2(\partial\Omega)}
&=&0
\\
(\ddiv\sigma,v)_{L^2(\Omega)}
&-&(w,v)_{L^2(\Omega)}
&&
&=& 0
\\
-(\sigma\cdot\mathbf n,\eta)_{L^2(\partial\Omega)}
&&
&&
&=&
-\lambda (\gamma,\eta)_{L^2(\partial\Omega)}
\end{array}
\end{align*}
for all $(\tau,(v,\eta))\in\Sigma\times U$.
It is not difficult to see that the system is inf-sup
stable and the finite eigenvalues coincide with those
of the original system.
Furthermore, the relations $\ddiv\sigma=w$ and
$\sigma\cdot\mathbf n =\lambda\gamma$
allow for substitutions in the first row of the
system resulting in the equivalent eigenvalue problem
$$
(\sigma,\tau)_{L^2(\Omega)} 
+
(\ddiv\sigma,\ddiv\tau)_{L^2(\Omega)} 
=
\lambda^{-1}(\sigma\cdot\mathbf n,\tau\cdot\mathbf n)_{L^2(\Omega)} 
\quad\text{for all }\tau\in\Sigma.
$$
An analogous equivalence can be used for the discretization
and results in a positive definite system matrix,
which is beneficial from a practical point of view.

The model discretization presented here is based on 
a partition $\mathcal T$ in convex polytopes,
a subspace $\Sigma_h\subseteq \Sigma$ from an inf-sup stable
pair $(\Sigma_h$, $V_h)$ for the Laplacian
(as in Section~\ref{s:lapl})
with the compatibility condition $\ddiv\Sigma_h \subseteq V_h$,
and $U_h:=V_h\times \operatorname{tr}_{\partial\Omega}\Sigma_h$
(the symbol $\operatorname{tr}_{\partial\Omega}$
refers to the normal trace).
This implies the relation
$$
(\ddiv,\operatorname{tr}_{\partial\Omega})\Sigma_h
\subseteq
U_h
$$
sufficient for Condition~\ref{cond:incl} to hold.
The verification of Condition~\ref{cond:proj} is immediate
because the orthogonal projection in $U_h$ is the product of
the orthogonal projections with respect to the components
$w,\gamma$.
The assumption that the trace variable is discretized 
with $\operatorname{tr}_{\partial\Omega}\Sigma_h$
allows for a reduction to a positive-definite system
as outlined above for the continuous setting.

\begin{example}\label{ex:RTsteklov}
On simplicial triangulations, the simplest example is the 
pairing of
the lowest-order Raviart--Thomas space $\Sigma_h$
and the product space of piecewise constants with respect
to $\mathcal T$ and the piecewise constants with respect to
the boundary faces $\mathcal F(\partial\Omega)$,
written
$$
 U_h := P_0(\mathcal T)\times P_0(\mathcal F(\partial \Omega)).
$$
The inf-sup stability of the discrete system and the compatibility
condition
are then consequences of standard results on the 
Raviart--Thomas element
\cite{BoffiBrezziFortin2013}.
\end{example}

In what follows it is assumed that
$U_h$ contains the subspace
$P_0(\mathcal T)\times P_0(\mathcal F(\partial \Omega))$
of piecewise constant functions.
In order to verify Condition~\ref{cond:main},
it then suffices to determine a constant $\delta_h$ such that
$$
\| w - \Pi_{0,\mathcal F(\partial \Omega)} w\|_{L^2(\partial\Omega)}^2
\leq 
\delta_h^2
(
\|\nabla w\|_{L^2(\Omega)}^2 + \|w\|_{L^2(\Omega)}^2
)
$$
where
$\Pi_{0,\mathcal F(\partial \Omega)}$
is the $L^2$ projection onto the piecewise constants with 
respect to the boundary faces.
Note that $w\in H^1(\Omega)$ so that
$w|_{\partial\Omega}\in H^{1/2}(\partial\Omega)$.
This is achieved with the following trace inequality
for convex polytopes
in terms of the geometry of an inscribed simplex.

\begin{lemma}\label{l:trace}
 Let $\omega\subseteq \mathbb R^n$ be a convex polytope
 with a face $F$
 and let $T\subseteq \omega$ be an inscribed simplex with $F$ as
 one of its faces.
 Let $v\in H^1(\omega)$ with $\int_F v\,ds =0$.
 Then
 $$
  \|v\|_{L^2(F)} 
\leq 
\sqrt{\frac{\operatorname{meas}_{n-1}(F)}{\operatorname{meas}_{n}(T)}} h_T
\sqrt{\frac{n+2\pi}{n\pi^2}}
\|\nabla v\|_{L^2(\omega)}
$$
where the symbol
$\operatorname{meas}_{n-1}(F)$ denotes the $(n-1)$-dimensional
surface measure of $F$ and $\operatorname{meas}_{n}(T)$
denotes the volume of $T$.
\end{lemma}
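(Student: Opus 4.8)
The plan is to exploit the cone structure of the inscribed simplex $T$ and to combine a Gauss--Green identity on $T$ with the Payne--Weinberger Poincar\'e inequality \cite{PayneWeinberger1960}. Write $z$ for the vertex of $T$ opposite to the face $F$, so that $T=\operatorname{conv}(F\cup\{z\})$ is a cone over $F$ with apex $z$; let $\nu$ be the outer unit normal of $T$ along $F$ and $d:=\operatorname{dist}(z,\operatorname{aff}F)$ the associated height, whence $\operatorname{meas}_n(T)=d\,\operatorname{meas}_{n-1}(F)/n$. The central device is the affine vector field $\beta(x):=(x-z)/d$: it satisfies $\operatorname{div}\beta=n/d$, it has $\beta\cdot\nu=1$ on $F$ since $(x-z)\cdot\nu=d$ for $x\in\operatorname{aff}F$, and it is tangential along every other face of $T$, because each such face contains $z$ and hence $x-z$ lies in that face's tangent space.

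First I would record the Gauss--Green identity on the Lipschitz domain $T$: for an arbitrary constant $c\in\mathbb R$,
\[
 \int_F (v-c)^2\,ds
 =\int_{\partial T}(v-c)^2\,\beta\cdot n\,ds
 =\int_T\operatorname{div}\big((v-c)^2\beta\big)\,dx
 =\int_T\Big(\frac nd(v-c)^2+\frac 2d(v-c)(x-z)\cdot\nabla v\Big)\,dx,
\]
which holds for $v\in H^1(\omega)$ by a density argument, using that $(v-c)^2\beta\in W^{1,1}(T;\mathbb R^n)$ and that the trace of $v|_T$ on $F$ coincides with the trace of $v$ on $F\subseteq\partial\omega$. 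Choosing $c=\bar v_T:=\operatorname{meas}_n(T)^{-1}\int_T v\,dx$ and invoking $\int_F v\,ds=0$ yields $\int_F(v-\bar v_T)^2\,ds=\int_F v^2\,ds+\bar v_T^{\,2}\operatorname{meas}_{n-1}(F)\ge\int_F v^2\,ds$, so the displayed identity already bounds $\|v\|_{L^2(F)}^2$ from above by the two volume terms on the right.

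It then remains to estimate those two terms. For the first, the Payne--Weinberger inequality on the convex set $T$ gives $\|v-\bar v_T\|_{L^2(T)}\le(h_T/\pi)\|\nabla v\|_{L^2(T)}$, hence $\frac nd\|v-\bar v_T\|_{L^2(T)}^2\le\frac{n h_T^2}{d\pi^2}\|\nabla v\|_{L^2(T)}^2$. For the second, $|x-z|\le h_T$ on $T$, so Cauchy--Schwarz and the same Poincar\'e bound give $\frac 2d\int_T(v-\bar v_T)(x-z)\cdot\nabla v\,dx\le\frac{2h_T}{d}\|v-\bar v_T\|_{L^2(T)}\|\nabla v\|_{L^2(T)}\le\frac{2h_T^2}{d\pi}\|\nabla v\|_{L^2(T)}^2$. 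Summing the two contributions produces the factor $\frac{h_T^2}{d}\cdot\frac{n+2\pi}{\pi^2}$; substituting $1/d=\operatorname{meas}_{n-1}(F)/\big(n\operatorname{meas}_n(T)\big)$, using $\|\nabla v\|_{L^2(T)}\le\|\nabla v\|_{L^2(\omega)}$ since $T\subseteq\omega$, and taking square roots yields the asserted inequality. The only genuinely delicate point is the justification of the Gauss--Green formula for $H^1$ data on the merely Lipschitz simplex, together with the compatibility of the traces on $F$; the rest is a direct computation resting on the explicit convex Poincar\'e constant $h_T/\pi$.
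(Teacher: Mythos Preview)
Your proof is correct and follows essentially the same route as the paper's: both arguments apply the divergence theorem to the vector field $x\mapsto (x-z)\,(v-\bar v_T)^2$ on the simplex $T$, exploit that $x-z$ is tangential on all faces of $T$ except $F$, and then control the resulting volume terms with the Payne--Weinberger constant $h_T/\pi$. The only cosmetic difference is that the paper bounds the cross term $2\int_T|v-\bar v_T|\,|\nabla v|$ via Young's inequality with a free parameter $\alpha$ and then optimizes to $\alpha=\pi/h_T$, whereas you use Cauchy--Schwarz followed by Poincar\'e; the two manipulations produce the identical constant $(n+2\pi)/(n\pi^2)$, so your shortcut simply bypasses the optimization step.
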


\begin{proof}
Without loss of generality one may assume
$\int_T v\, dx =0$
because 
$\|v\|_{L^2(F)} \leq \|v - c\|_{L^2(F)}$
for any real constant $c$, in particular
for the volume average $c=\fint_T v \,dx$.
Let $P$ denote the vertex of $T$ opposite to $F$.
The integration-by-parts formula 
with the outer unit normal $\mathbf n_T$ to $\partial T$
implies
$$
n\int_T v^2 \,dx
+
\int_T(\bullet - P)\cdot\nabla (v^2) \,dx
=\int_{\partial T} v^2 (\bullet-P)\cdot\mathbf n_T\,ds.
$$
The vector $(x-P)$ is tangential to $\partial T$ for
almost all $x\in\partial T\setminus F$ and thus
orthogonal to $\mathbf n_T$.
If $x$ belongs to the interior of $F$, an elementary
geometric consideration (volume of a cone in $\mathbb R^n$) shows that 
$(x-P)\cdot \mathbf n_T 
  = n\operatorname{meas}_{n}(T)/\operatorname{meas}_{n-1}(F)$.
Therefore the integral on the right-hand side
equals
$n\operatorname{meas}_{n}(T)/\operatorname{meas}_{n-1}(F)
 \int_F v^2\,dx$.
This leads to the classical trace identity
\begin{align*}
\int_F v^2\,ds
&=
\frac{\operatorname{meas}_{n-1}(F)}{\operatorname{meas}_{n}(T)} \int_T v^2 \,dx
+
\frac{\operatorname{meas}_{n-1}(F)}{n\operatorname{meas}_{n}(T)} \int_T(\bullet - P)\cdot\nabla (v^2) \,dx
\\
&
\leq
\frac{\operatorname{meas}_{n-1}(F)}{\operatorname{meas}_{n}(T)} \int_T v^2 \,dx
+
\frac{2 h_T \operatorname{meas}_{n-1}(F)}{n\operatorname{meas}_{n}(T)} \int_T |v| \, |\nabla v| \,dx.
\end{align*}
Using the Young inequality with an arbitrary scaling parameter
$\alpha>0$, the second term on the right-hand side can be 
bounded as follows
$$
\frac{2 h_T \operatorname{meas}_{n-1}(F)}{n\operatorname{meas}_{n}(T)} \int_T |v| \, |\nabla v| \,dx
\leq 
\frac{h_T \operatorname{meas}_{n-1}(F)}{n\operatorname{meas}_{n}(T)}
\left(
\alpha \|v\|_{L^2(T)}^2 
 + \frac{1}{\alpha} \|\nabla v\|_{L^2(T)}^2
\right) .
$$
Together with the Poincar\'e bound 
$\|v\|_{L^2(T)} \leq (h_T/\pi) \|\nabla v\|_{L^2(T)}$
this yields
$$
\| v \|_{L^2(F)}^2
\leq 
\frac{\operatorname{meas}_{n-1}(F)}{\operatorname{meas}_{n}(T)}
\left(
\frac{h_T^2(1 + h_T \alpha /n)}{\pi^2}
 +
\frac{h_T}{n}
 \frac{1}{\alpha}\right) \|\nabla v\|_{L^2(T)}^2.
$$
For $\alpha:=\pi/h_T$ we obtain
$$
\| v \|_{L^2(F)}^2
\leq 
\frac{\operatorname{meas}_{n-1}(F)}{\operatorname{meas}_{n}(T)} h_T^2
\left(
\frac{1}{\pi^2}
+
\frac{2}{\pi n}
\right) \|\nabla v\|_{L^2(T)}^2.
$$
\end{proof}

If $m$ is the maximal possible number of faces of a polytope
$K$ of $\mathcal T$ and $\mathcal T$ is not a singleton set,
an overlap argument
and Lemma~\ref{l:trace} show that Condition~\ref{cond:main}
is satisfied with 
$$
\delta_h=
\sqrt{m-1}
\max_{\substack{K\in\mathcal T \\ F\subseteq T\subseteq K}}
\sqrt{\frac{\operatorname{meas}_{n-1}(F)}{\operatorname{meas}_{n}(T)}} h_T
\sqrt{\frac{n+2\pi}{n\pi^2}} .
$$
The notation $F\subseteq T\subseteq K$ indicates that
$F$ is a boundary face and there exists a simplex $T$ inscribed to
the polytope $K\in\mathcal T$ such that $F$ is simultaneously a face
of $T$ and $K$.

\begin{corollary}[guaranteed lower Steklov eigenvalue bound]
\label{c:steklov}
Assume the above setting for the mixed formulation of
the Steklov eigenproblem.
Let $\Sigma_h\subseteq\Sigma$, $U_h\subseteq U$ be an inf-sup
stable pair of finite-dimensional subspaces related to a
partition $\mathcal T$ (with at least two elements)
in convex polytopes (with at most $m$ faces)
with 
$(\ddiv,\operatorname{tr}_{\partial\Omega})\Sigma_h \subseteq U_h$
where $U_h$ contains the piecewise constants
$P_0(\mathcal T)\times P_0(\mathcal F(\partial\Omega))$.
Then, the $J$th eigenvalue $\lambda_J$ 
of \eqref{e:evp} and the $J$th
discrete eigenvalue $\lambda_{J,h}$ of \eqref{e:devp} satisfy
 $$
\frac{\lambda_{J,h}}
{1+
 (m-1)\displaystyle
\max_{\substack{K\in\mathcal T \\ F\subseteq T\subseteq K}}
 \frac{\operatorname{meas}_{n-1}(F)}{\operatorname{meas}_{n}(T)} h_T^2
 \frac{n+2\pi}{n\pi^2} 
 \lambda_{J,h}}
\leq  \lambda_J .
$$
\end{corollary}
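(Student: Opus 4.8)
The plan is to derive Corollary~\ref{c:steklov} as a direct application of Theorem~\ref{t:lowerbound}, so the work reduces to checking that the abstract hypotheses hold for the mixed Steklov formulation introduced above and then substituting the value of $\delta_h$ displayed before the statement. First I would settle the structural hypotheses of the theorem. The pair $(a,b)$ satisfies the inf-sup condition \eqref{e:infsup}; the admissible space $U_0$ is identified as $\{(w,w|_{\partial\Omega}):w\in H^1(\Omega)\}$ (testing \eqref{e:gradient} against $[C_c^\infty(\Omega)]^n\subseteq\Sigma$ forces $Gu=\nabla w$ and $w\in H^1(\Omega)$, and testing against general $\tau\in H_\Gamma(\ddiv,\Omega)$ together with the integration-by-parts formula for $H^1\times H_\Gamma(\ddiv,\Omega)$ identifies the boundary component $\gamma=w|_{\partial\Omega}$); hence $U_0$ is dense in $U=L^2(\Omega)\times L^2(\partial\Omega)$, as one sees by approximating the interior and boundary components separately, using $C_c^\infty(\Omega)$ functions for the former and $H^1$-extensions concentrated in a thin collar of $\partial\Omega$ for the latter. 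Compactness of the solution operator follows because it factors through $f\mapsto w\in H^1(\Omega)$ and the trace map $H^1(\Omega)\to L^2(\partial\Omega)$ as well as $H^1(\Omega)\hookrightarrow L^2(\Omega)$ are compact on a Lipschitz domain; this is the classical fact that the Steklov problem has discrete spectrum.

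Next I would verify the compatibility conditions. The inclusion of kernels (Condition~\ref{cond:incl}) is immediate from $(\ddiv,\operatorname{tr}_{\partial\Omega})\Sigma_h\subseteq U_h$: for $\tau_h\in\Sigma_h$ the functional $b(\tau_h,\cdot)$ only pairs against $\ddiv\tau_h\in L^2(\Omega)$ and $\tau_h\cdot\mathbf n\in L^2(\partial\Omega)$, both of which lie in $U_h$, so the $L^2(\Omega)\times L^2(\partial\Omega)$-orthogonal projection $P_h$ onto $U_h$ satisfies \eqref{e:projP}. Since $P_h$ is the Cartesian product of the $L^2(\Omega)$- and $L^2(\partial\Omega)$-orthogonal projections, the Pythagorean identity \eqref{e:proj_i} holds, the stability \eqref{e:proj_ii} is trivial ($c$ being the $L^2(\Omega)$ product and orthogonal projections nonexpansive), and \eqref{e:proj_iii} holds because $\operatorname{ker}\ell\cap U_h=V_h\times\{0\}$ is invariant under $P_h$; thus Condition~\ref{cond:proj} holds for every $u\in U$.

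The one substantive step is Condition~\ref{cond:main}. For $u=(w,w|_{\partial\Omega})\in U_0$ one has $Gu=\nabla w$ and $\|u\|_c^2=\|w\|_{L^2(\Omega)}^2$, and since $P_0(\mathcal F(\partial\Omega))\subseteq\operatorname{tr}_{\partial\Omega}\Sigma_h\subseteq U_h$ the boundary component of $u-P_hu$ has $L^2(\partial\Omega)$-norm no larger than that of $w|_{\partial\Omega}-\Pi_{0,\mathcal F(\partial\Omega)}(w|_{\partial\Omega})$; hence $\|u-P_hu\|_\ell^2\le\sum_F\|w-\Pi_{0,F}w\|_{L^2(F)}^2$ summed over the boundary faces $F$. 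Applying Lemma~\ref{l:trace} on each $F$ with a simplex $T\subseteq K$ inscribed in the polytope $K\in\mathcal T$ adjacent to $F$ bounds the $F$-term by $(\operatorname{meas}_{n-1}(F)/\operatorname{meas}_n(T))\,h_T^2(n+2\pi)/(n\pi^2)\,\|\nabla w\|_{L^2(K)}^2$; an overlap argument — each $K$ has at most $m$ faces, and since $\mathcal T$ is not a singleton and $\Omega$ is connected at least one of them is an interior face, so $K$ carries at most $m-1$ boundary faces — then yields $\|u-P_hu\|_\ell^2\le(m-1)\big(\max_{F\subseteq T\subseteq K}(\operatorname{meas}_{n-1}(F)/\operatorname{meas}_n(T))\,h_T^2\big)(n+2\pi)/(n\pi^2)\,\|\nabla w\|_{L^2(\Omega)}^2\le\delta_h^2(\|Gu\|_a^2+\|u\|_c^2)$ with the stated $\delta_h$. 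Finally Theorem~\ref{t:lowerbound} gives $\lambda_{J,h}/(1+\delta_h^2\lambda_{J,h})\le\lambda_J$, which is the asserted bound once $\delta_h^2$ is written out. The main obstacle I anticipate is not any single estimate but the bookkeeping of the overlap argument — counting each boundary face once, checking that an admissible inscribed simplex with $F$ as a face exists inside each convex polytope $K$, and that the ``$F\subseteq T\subseteq K$'' maximum in $\delta_h$ ranges over the right index set — together with recording the density of $U_0$ and compactness of the resolvent carefully enough to invoke Theorem~\ref{t:lowerbound} legitimately.
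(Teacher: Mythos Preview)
Your proposal is correct and matches the paper's approach: the paper verifies Conditions~\ref{cond:incl}, \ref{cond:main}, and \ref{cond:proj} and records the value of $\delta_h$ (via Lemma~\ref{l:trace} and the $(m-1)$-overlap count) in the paragraphs immediately preceding the corollary, and then invokes Theorem~\ref{t:lowerbound} exactly as you outline. Your write-up is in fact more explicit than the paper on the identification and density of $U_0$, compactness of the resolvent, and the overlap bookkeeping; the only spot where your wording is looser than it should be is Condition~\ref{cond:proj}\eqref{e:proj_iii}, since invariance of $\ker\ell\cap U_h$ under $P_h$ does not by itself yield the $\mathcal A_h$-orthogonality $P_h u\perp_{\mathcal A_h}\ker\ell$, but the paper is equally terse at this step.
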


\begin{remark}
 Under shape-regularity assumptions,
 in the bound of Corollary~\ref{c:steklov} the prefactor
 of $\lambda_{h,J}$ in the denominator is proportional
 to $h$.
\end{remark}

\begin{example}
In this example, the Steklov eigenvalue problem in two dimensions 
is discretized
with the lowest-order Raviart--Thomas finite element pairing
from Example~\ref{ex:RTsteklov}
with  respect to a regular triangulation.
The domain under consideration is the L-shaped domain
from Example~\ref{ex:lapl}
with the initial triangulation from Figure~\ref{f:L}.
Table~\ref{tab:steklov} displays
the discrete eigenvalue, the guaranteed lower bound
from Corollary~\ref{c:steklov},
and an upper bound computed with a first-order conforming
FEM,
on a sequence of uniformly refined meshes.
\begin{table}
 \begin{tabular}{cccc}
  $h\times\sqrt{2}$ & $\lambda_{1,h}$ & lower bound & upper bound\\
  \hline   
   $2^0$    &  0.340304 &   0.188241 &   0.344375 \\
   $2^{-1}$ &  0.341129 &   0.242816 &   0.342217 \\
   $2^{-2}$ &  0.341342 &   0.283844 &   0.341624 \\
   $2^{-3}$ &  0.341397 &   0.309994 &   0.341469 \\
   $2^{-4}$ &  0.341411 &   0.324951 &   0.341430 \\
 \end{tabular}
\caption{Results for the Steklov eigenvalues on the L-shaped domain.
\label{tab:steklov}}
\end{table}
\end{example}

\section{Conclusive remarks}\label{s:concl}
There are many more applications of Theorem~\ref{t:lowerbound}
beyond the model problems highlighted in this paper.
For example, the Stokes eigenvalues in the formulation
without symmetry constraint on the stress can be discretized
in a dual pseudostress formulation \cite{CaiWang2010},
which can be applied for the computation of guaranteed lower
eigenvalue bounds of the Stokes system, with
or without lower order terms.
Another example is the biharmonic eigenvalue problem,
where a dual mixed method has been provided by \cite{ChenHuang2020}.
A computable quantity $\delta_h$ then requires knowledge on
the fundamental frequency of the biharmonic operator with
free boundary condition over reference polyhedra.
Those can be numerically
computed as in \cite{CarstensenGallistl2014}.

\section*{Acknowledgment}
The author thanks Joscha Gedicke (U Bonn) for providing
the Arnold--Winther eigenvalues from
Table~\ref{tab:elast}.
The author is supported by the ERC trough
the Starting Grant \emph{DAFNE},
agreement ID 891734.

\bibliographystyle{abbrv}
\bibliography{evp}

\def\polhk#1{\setbox0=\hbox{#1}{\ooalign{\hidewidth
  \lower1.5ex\hbox{`}\hidewidth\crcr\unhbox0}}}
\begin{thebibliography}{10}

\bibitem{AcostaDuran2017}
G.~{Acosta} and R.~G. {Dur\'an}.
\newblock {\em {Divergence operator and related inequalities}}.
\newblock New York, NY: Springer, 2017.

\bibitem{ArnoldFalkWinther2006}
D.~N. {Arnold}, R.~S. {Falk}, and R.~{Winther}.
\newblock {Finite element exterior calculus, homological techniques, and
  applications}.
\newblock {\em {Acta Numerica}}, 15:1--155, 2006.

\bibitem{ArnoldWinther2002}
D.~N. {Arnold} and R.~{Winther}.
\newblock {Mixed finite elements for elasticity}.
\newblock {\em {Numer. Math.}}, 92(3):401--419, 2002.

\bibitem{Boffi2010}
D.~Boffi.
\newblock Finite element approximation of eigenvalue problems.
\newblock {\em Acta Numer.}, 19:1--120, 2010.

\bibitem{BoffiBrezziFortin2013}
D.~Boffi, F.~Brezzi, and M.~Fortin.
\newblock {\em Mixed Finite Element Methods and Applications}, volume~44 of
  {\em Springer Series in Computational Mathematics}.
\newblock Springer, Heidelberg, 2013.

\bibitem{Braess2007}
D.~Braess.
\newblock {\em Finite Elements.\ {T}heory, Fast Solvers, and Applications in
  Elasticity Theory}.
\newblock Cambridge University Press, Cambridge, third edition, 2007.

\bibitem{CaiWang2010}
Z.~{Cai} and Y.~{Wang}.
\newblock {A multigrid method for the pseudostress formulation of Stokes
  problems}.
\newblock {\em {SIAM J. Sci. Comput.}}, 29(5):2078--2095, 2007.

\bibitem{CarstensenGallistl2014}
C.~Carstensen and D.~Gallistl.
\newblock Guaranteed lower eigenvalue bounds for the biharmonic equation.
\newblock {\em Numer. Math.}, 126(1):33--51, 2014.

\bibitem{CarstensenGallistlSchedensack2016}
C.~Carstensen, D.~Gallistl, and M.~Schedensack.
\newblock {$L^2$} best-approximation of the elastic stress in the
  {A}rnold-{W}inther {FEM}.
\newblock {\em IMA J. Numer. Anal.}, 36(3):1096--1119, 2016.

\bibitem{CarstensenGedicke2014}
C.~Carstensen and J.~Gedicke.
\newblock Guaranteed lower bounds for eigenvalues.
\newblock {\em Math. Comp.}, 83:2605--2629, 2014.

\bibitem{CarstensenPuttkammer2022}
C.~Carstensen and S.~Puttkammer.
\newblock Adaptive guaranteed lower eigenvalue bounds with optimal convergence
  rates.
\newblock 2022.
\newblock \url{https://arxiv.org/abs/2203.01028}.

\bibitem{ChenHuang2020}
L.~Chen and X.~Huang.
\newblock Finite elements for divdiv-conforming symmetric tensors.
\newblock 2020.
\newblock arXiv:2005.01271.

\bibitem{CostabelDauge2015}
M.~Costabel and M.~Dauge.
\newblock On the inequalities of {B}abu\v ska-{A}ziz, {F}riedrichs and
  {H}organ-{P}ayne.
\newblock {\em Arch. Ration. Mech. Anal.}, 217(3):873--898, 2015.

\bibitem{Gallistl2021}
D.~Gallistl.
\newblock A posteriori error analysis of the inf-sup constant for the
  divergence.
\newblock {\em SIAM J. Numer. Anal.}, 59(1):249--264, 2021.

\bibitem{GallistlOlkhovskiy2021}
D.~Gallistl and V.~Olkhovskiy.
\newblock Computational lower bounds of the {M}axwell eigenvalues.
\newblock {\em SIAM J. Numer. Anal.}, 2022.
\newblock In press.

\bibitem{GuzmanNeilan2014}
J.~Guzm{\'a}n and M.~Neilan.
\newblock Conforming and divergence-free {S}tokes elements on general
  triangular meshes.
\newblock {\em Math. Comp.}, 83(285):15--36, 2014.

\bibitem{HorganPayne1983}
C.~O. Horgan and L.~E. Payne.
\newblock On inequalities of {K}orn, {F}riedrichs and {B}abu\v ska-{A}ziz.
\newblock {\em Arch. Rational Mech. Anal.}, 82(2):165--179, 1983.

\bibitem{LaugensenSiudeja2010}
R.~S. Laugesen and B.~A. Siudeja.
\newblock Minimizing {N}eumann fundamental tones of triangles: an optimal
  {P}oincar\'e inequality.
\newblock {\em J. Differential Equations}, 249(1):118--135, 2010.

\bibitem{LiuOishi2013}
X.~Liu and S.~Oishi.
\newblock Verified eigenvalue evaluation for the {L}aplacian over polygonal
  domains of arbitrary shape.
\newblock {\em SIAM J. Numer. Anal.}, 51(3):1634--1654, 2013.

\bibitem{PayneWeinberger1960}
L.~E. {Payne} and H.~F. {Weinberger}.
\newblock {An optimal Poincar\'e inequality for convex domains}.
\newblock {\em {Arch. Ration. Mech. Anal.}}, 5:286--292, 1960.

\bibitem{TanakaTakayasuLiuOishi2014}
K.~Tanaka, A.~Takayasu, X.~Liu, and S.~Oishi.
\newblock Verified norm estimation for the inverse of linear elliptic operators
  using eigenvalue evaluation.
\newblock {\em Jpn. J. Ind. Appl. Math.}, 31(3):665--679, 2014.

\bibitem{WeinsteinStenger1972}
A.~Weinstein and W.~Stenger.
\newblock {\em Methods of Intermediate Problems for Eigenvalues}, volume~89 of
  {\em Theory and Ramifications, Mathematics in Science and Engineering}.
\newblock Academic Press, New York, 1972.

\bibitem{YouXieLiu2019}
C.~{You}, H.~{Xie}, and X.~{Liu}.
\newblock {Guaranteed eigenvalue bounds for the Steklov eigenvalue problem}.
\newblock {\em {SIAM J. Numer. Anal.}}, 57(3):1395--1410, 2019.

\end{thebibliography}

\end{document}